\newcommand{\CC}{\mathcal{C}}
\newcommand{\KK}{\mathcal{K}}
\newcommand{\EE}{\mathcal{E}}
\newcommand{\PP}{\mathbb{P}}
\newcommand{\image}{\textnormal{im}\,}
\newcommand{\kernel}{\textnormal{ker}\,}
\newcommand{\cokernel}{\textnormal{coker }}
\newcommand{\Hom}{\textnormal{Hom}}
\newcommand{\Ext}{\textnormal{Ext}}
\newcommand{\al}{\alpha}
\newcommand{\Coh}{\textnormal{Coh}}
\newcommand{\HH}{\mathcal{H}}
\newcommand{\arinj}{\ar@{^{(}->}}
\newcommand{\arsurj}{\ar@{->>}}
\newcommand{\module}{\textnormal{mod}\,}
\newcommand{\XX}{\mathcal{X}}
\newcommand{\YY}{\mathcal{Y}}
\newcommand{\BB}{\mathcal{B}}
\newtheorem{theorem}{Theorem}[section]
\newtheorem{lemma}[theorem]{Lemma}
\newtheorem{proposition}[theorem]{Proposition}
\newtheorem{coro}[theorem]{Corollary}
\theoremstyle{definition}
\theoremstyle{remark}
\newtheorem{remark}[theorem]{Remark}
\newtheorem{question}[theorem]{Question}
\numberwithin{equation}{section}
\begin{document}

\title[Torsion pairs and filtrations in abelian categories]{Torsion pairs and filtrations in abelian categories with tilting objects}

\author{Jason Lo}
\email{jccl@alumni.stanford.edu}

\keywords{tilting object, torsion pair, derived equivalence, Fourier-Mukai transform.}
\subjclass[2010]{Primary 18E30; Secondary: 18E30}

\begin{abstract}
Given a noetherian abelian category $\mathcal Z$ of homological dimension two with a tilting object $T$, the abelian category $\mathcal Z$ and the abelian category of modules over $\text{End} (T)^{\textit{op}}$ are related by a sequence of two tilts;  we give an explicit description of the torsion pairs involved.  We then use our techniques to obtain a simplified proof of a theorem of Jensen-Madsen-Su, that $\mathcal Z$ has a three-step filtration by extension-closed subcategories.  Finally, we generalise Jensen-Madsen-Su's filtration to a noetherian abelian category of any finite homological dimension.
\end{abstract}

\maketitle

\section{Introduction}

In representation theory and algebraic geometry, there are many important questions surrounding derived equivalences of the form
\begin{equation}\label{eq18}
  \Phi : D^b(\mathcal Z) \overset{\thicksim}{\to} D^b(\mathcal A)
\end{equation}
where $D^b(\mathcal Z)$ and $D^b(\mathcal A)$ are the bounded derived categories of two abelian categories $\mathcal Z$ and $\mathcal A$,  possibly with quite different origins.

For instance, in representation theory, we can take $\mathcal Z$ to be a module category, then let $T$ be a tilting object in $\mathcal Z$,  let $\mathcal A$ be the category of right modules over the endomorphism algebra of $T$, and let $\Phi$ be the derived functor $R\Hom (T,-)$.  In algebraic geometry, we can take $\mathcal Z$ to be the category of coherent sheaves on a variety $X$, let $Y$ be a moduli of stable sheaves on $X$, and let $\Phi$ be the Fourier-Mukai transform whose kernel is the universal family.  In both these two scenarios, we can try to deduce properties of moduli spaces of objects in $\mathcal Z$ from those of moduli spaces of objects in $\mathcal A$, or vice versa.  For moduli of modules, this strategy was  used in works such as Chindris' \cite{Chindris}.    For moduli of sheaves, the same strategy was used in papers by Bridgeland \cite{BriTh}, Bridgeland-Maciocia \cite{BMef}, Bruzzo-Maciocia \cite{BM}, and subsequent works by many others.

There are also occasions when $\mathcal Z$ is the category of coherent sheaves on a variety $X$, and $\mathcal A$ the module category over  the endomorphism algebra of a tilting sheaf $T$ on $X$, with $\Phi$ being the derived functor $R\Hom (T,-)$.  In this case, we obtain connections between moduli spaces of modules and moduli spaces of sheaves.  Results along this line of thought can be found in Craw \cite{Craw} and Ohkawa \cite{Ohk}, for example.  Even in the case when $T$ is not a tilting sheaf (in which case $\Phi$ is not necessarily an equivalence of derived categories), this approach still proves fruitful, as shown in \'{A}lvarez-C\'{o}nsul-King \cite{ACK}.

In many of the examples mentioned above, it helps to identify subcategories of $\mathcal Z$ that are `well-behaved', in the sense that  we may want them to be extension-closed, or for some of them to contain all the objects we  hope to parametrise in a moduli space.  In Fourier-Mukai transforms between varieties, these subcategories could be taken as the subcategories of `WIT$_i$-sheaves'; in exact equivalences between derived categories of modules, we can consider anologues of categories of WIT$_i$-sheaves, called the categories of `static' and `costatic' modules (e.g.\ see \cite[Section 4]{BB}  and \cite{Tonolo}).  Under our setting \eqref{eq18}, we denote these subcategories  by $\XX_i$ and $\YY_i$ in Section \ref{sec1} below.

In Chindris' work \cite{Chindris} (when the tilting object $T$ has homological dimension  one), a crucial property enjoyed by the subcategories $\XX_0, \XX_1$ of $\mathcal Z$ is that they `filter' the entire abelian category $\mathcal Z$.  That is, any object $E$ of $\mathcal Z$ has a filtration where the factors lie in the categories $\XX_i$.  Naturally, the question arises as to whether the categories $\XX_i$ filter $\mathcal Z$ when the homological dimension of $T$ is strictly larger than one.

As mentioned in Jensen-Madsen-Su \cite{JMS}, the categories $\XX_i$ are too small to filter $\mathcal Z$ even when $T$ has homological dimension two.  Jensen-Madsen-Su then constructs three subcategories $\EE_0, \EE_1, \EE_2$ of $\mathcal Z$,  containing the categories $\XX_0, \XX_1, \XX_2$, respectively, that are extension-closed and filter $\mathcal Z$ (see Theorem \ref{theorem1}).

In \cite{JMS}, it is remarked that their proofs of Theorems \ref{theorem1} and \ref{theorem2} can be simplified using spectral sequences.  We realise this spectral sequence approach in Section \ref{sec3},  recovering Theorems \ref{theorem1} and \ref{theorem2}.

On the other hand, motivated by mirror symmetry, it is important to understand the space of stability conditions (in the sense of Bridgeland) on the derived category $D^b(\mathcal Z)$, for $\mathcal Z$ of various origins, and the moduli spaces associated to these stability conditions.  One question within this framework is, given two t-structures on $D^b(\mathcal Z)$, are they related by a sequence of tilts (using torsion pairs)?  Answering this question can help us understand moduli spaces of stable objects in $D^b(\mathcal Z)$.  For instance, in Bayer-Macr\`{i}-Toda's work \cite{BMT}, they explicitly construct  a suitable t-structure for a conjectured stability condition on $D^b(\Coh(X))$ on a smooth projective threefold $X$; this t-structure was constructed by a sequence of two tilts from the standard t-structure.  Using the descriptions of these tilts from \cite{BMT}, we are able to describe some stable objects in $D^b(\Coh(X))$, as is done in \cite[Theorem 3.17]{LM}.

Under a derived  equivalence of the form \eqref{eq18}, we have two t-structures on $D^b(\mathcal Z)$, namely the standard t-structure, as well as the pullback of the standard t-structure on $D^b(\mathcal A)$ via $\Phi$.  It is not hard to show that these two t-structures are related by a sequence of two tilts when the homological dimension of $\mathcal Z$ is at most two - we do this in Proposition \ref{pro2}.  The point is, we explicitly describe the torsion pairs used in these tilts, and note the striking symmetry in diagram \eqref{eq15}.

In fact, in understanding the tilts between the two t-structures on $D^b(\mathcal Z)$ mentioned above, we are led to studying the subcategories $\BB_0, \BB_1, \BB_2$ of $\mathcal Z$ (defined in Section \ref{sec1}), which are larger than the categories $\XX_0, \XX_1, \XX_2$ when $T$ has homological dimension two.  The categories $\BB_i$ turn out to be a suitable tool for generalising Jensen-Madsen-Su's filtration to any noetherian abelian category with a tilting object $T$, for $T$ having any finite homological dimension.  Our generalisation is Theorem \ref{theorem3}; it reduces to Jensen-Madsen-Su's filtration  when $T$ has homological dimension two (see Corollary \ref{coro5}), and further reduces to the filtration used by Chindris \cite{Chindris} when $T$ has homological dimension one.


\bigskip
\noindent
\textbf{Acknowledgements.} The author is indebted to Calin Chindris for suggesting the problems that are addressed in this article, the many enlightening conversations without which the project could not have been completed, and various helpful comments on the exposition in the article.

\section{Notation}\label{sec1}


Throughout this article, $\mathcal Z$ will denote a noetherian abelian category of finite homological dimension $n \in \mathbb{Z}^+_0$ equipped with a tilting object $T$.  From now on, we will simply write $D(\mathcal Z)$ to denote the bounded derived category of $\mathcal Z$, and similarly for other abelian categories.

That the homological dimension  of $\mathcal Z$ is $n$ means, that  $\Ext^i_{\mathcal Z} (E,F)=0$ for all $E,F \in \mathcal Z$ and $i \notin [0,n]$.  That $T$ is a tilting object in $\mathcal Z$ implies that $\Ext^i_{\mathcal Z} (T,T)=0$ for all $i \neq 0$, and that the derived functor $\Phi := R\Hom_{D(\mathcal Z)} (T,-)$ induces a derived equivalence
\[
\xymatrix{
  D(\mathcal Z)  \ar[r]^\Phi_\thicksim & D (\mathcal A),
}
\]
where $\mathcal A := \module  A^{op}$, the category of finitely generated right $A$-modules, with $A$ being the endomorphism algebra $\Hom_{\mathcal Z}(T,T)$ of $T$.

Let $\Psi$ denote the quasi-inverse of $\Phi$, so that $\Psi \Phi \cong \text{id}$ and $\Phi \Psi \cong \text{id}$.  For any $E \in \mathcal Z$, we  will write $\Phi^i (E)$ to denote the degree-$i$ cohomology $H^i(\Phi (E))$ of $\Phi (E)$ with respect to the standard t-structure on $D(\mathcal A)$.  Similarly, for any $M \in D(\mathcal A)$, we will write $\Psi^j (M)$ to denote the degree-$j$ cohomology $H^j (\Psi M)$ of $\Psi (M)$ with respect to the standard t-structure on $D(\mathcal Z)$.

For any integer $i$, we define the full subcategory of $D(\mathcal Z)$
\[
  \XX_i^D := \{ E \in D(\mathcal Z) : \Phi^j (E)=0 \, \forall j \neq i\}.
\]
We also define the following subcategories of $\mathcal Z$
\begin{align*}
  \XX_i &:= \{ E \in \mathcal Z : \Phi^j(E)=0 \, \forall j \neq i\} =\XX_i^D \cap \mathcal Z, \\
  \BB_i &:= \{ E \in \mathcal Z : \Phi^i(E)=0\},
\end{align*}
as well as  the following subcategories of $\mathcal A$
\begin{align*}
  \YY_i &:= \{ M \in \mathcal A : \Psi^j (M)=0 \, \forall j \neq i\}=\Phi (\mathcal Z [-i]) \cap \mathcal A
  , \\
  \CC_i &= \{ M \in \mathcal A : \Psi^i (M)=0\}.
\end{align*}
And so, objects in the categories $\XX_i$ and $\YY_j$ are  analogues of the `WIT$_i$-sheaves' when we deal with Fourier-Mukai transforms between two varieites.

Note that $\Phi$ and $\Psi$ induce an equivalence of categories between $\XX_i$ and $\YY_{-i}[-i]$.

There are many properties of $\XX_i, \BB_i$ and $\YY_i$ that can be easily deduced from their definitions.  We will not list them all explicitly, except to note the following for now:
\begin{itemize}
\item $\XX_i, \XX_i^D,\BB_i, \YY_i, \CC_i$ are all closed under extensions.
\item $\BB_0$ is closed under subobjects in $\mathcal Z$.
\item $\BB_n$ is closed under quotients in $\mathcal Z$.
\end{itemize}
If $m$ is the largest integer for which $\mathcal C_{-m}$ is nonzero, then we also have
\begin{itemize}
\item $\mathcal C_0$ is closed under quotients in $\mathcal A$.
\item $\mathcal C_{-m}$ is closed under subobjects in $\mathcal A$.
\end{itemize}

While working in a fixed abelian category $\mathcal W$, for any subcategory $\mathcal V \subseteq \mathcal W$ we will write
\[
  \mathcal V^\circ := \{ E \in \mathcal W : \Hom_{\mathcal W} (\mathcal V, E)=0\}.
\]

\begin{remark}\label{remark6}
By \cite[Proposition 2.66]{FMTAG} and \cite[Section 3]{BB}, when $\Phi : D(\mathcal Z) \to D(\mathcal A)$ is a Fourier-Mukai transform and $\mathcal Z, \mathcal A$ are both categories of coherent sheaves on varieties, or when $\mathcal Z, \mathcal A$ are both module categories and $\Phi$ is the derived Hom functor comig from a tilting object,  we have the  spectral sequences ($\Psi$ being the quasi-inverse of $\Phi$)
\begin{equation}\label{eq1}
E_2^{p,q} = \Psi^p (\Phi^q E) \Rightarrow \begin{cases} E &\text{ if $p+q=0$} \\
0  &\text{ otherwise}\end{cases} \text{\quad for any $E \in \mathcal Z$},
\end{equation}
and
\begin{equation}\label{eq6}
  E^{p,q}_2 = \Phi^p(\Psi^q M) \Rightarrow \begin{cases} M &\text{ if $p+q=0$} \\ 0 &\text{ otherwise} \end{cases} \text{\quad for any $M \in \mathcal A$}.
\end{equation}
\end{remark}

The spectral sequences \eqref{eq1} and \eqref{eq6} will be used repeatedly when we recover Jensen-Madsen-Su's results below, so we will  say `the spectral condition holds' when these spectral sequences  exist.  When the spectral condition holds, given any object $E \in \mathcal Z$, we will simply write $E^{p,q}_r$ to denote the term $E^{p,q}_r$ in the spectral sequence \eqref{eq1} for $E$.  Similarly for objects in $\mathcal A$ and the spectral sequence \eqref{eq6}.

We will also write $\mathcal H := \Psi (\mathcal A)$.

\section{Tilts between the two hearts $\mathcal Z$ and $\mathcal A$}\label{sec2}

In this section, we consider the two hearts of t-structures $\mathcal Z$ and $\Psi (\mathcal A)$ in $D(\mathcal Z)$, and show that they are related by a sequence of two tilts in Proposition \ref{pro2}.  Moreover, we show that there is an apparent symmetry between these two tilts - see Lemma \ref{lemma10} and \eqref{eq15}.

The following lemma is instrumental in many constructions in this paper:

\begin{lemma}\cite[Lemma 1.1.3]{Pol}\label{lemma-Pol}
If $\mathcal A$ is a noetherian abelian category, then any full subcategory $\mathcal T$ of $\mathcal A$ closed under quotients and extensions is the torsion class of a torsion pair in $\mathcal A$.
\end{lemma}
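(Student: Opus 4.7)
The plan is to set $\mathcal F := \mathcal T^\circ$ and verify that $(\mathcal T, \mathcal F)$ forms a torsion pair in $\mathcal A$. The orthogonality condition $\Hom_{\mathcal A}(\mathcal T,\mathcal F)=0$ then holds by definition of $\mathcal T^\circ$, so the real content is producing, for each $E \in \mathcal A$, a short exact sequence
\[
 0 \to t(E) \to E \to E/t(E) \to 0
\]
with $t(E) \in \mathcal T$ and $E/t(E) \in \mathcal F$.

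To build $t(E)$, I would consider the collection $\Sigma_E$ of subobjects of $E$ that lie in $\mathcal T$ (it contains $0$, so it is nonempty). This collection is directed under inclusion: given $T_1,T_2 \in \Sigma_E$, extension-closure of $\mathcal T$ gives $T_1 \oplus T_2 \in \mathcal T$, and then the subobject $T_1 + T_2 \subseteq E$, being the image of $T_1 \oplus T_2 \to E$, lies in $\mathcal T$ by quotient-closure. Since $\mathcal A$ is noetherian, every ascending chain in $\Sigma_E$ stabilises, so a maximal element $t(E) \in \Sigma_E$ exists; this is the unique step that uses the noetherian hypothesis.

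To show $E/t(E) \in \mathcal F$, I would argue by contradiction. Suppose there is a nonzero morphism $f : T \to E/t(E)$ with $T \in \mathcal T$. Its image $\image f$ is a quotient of $T$, hence lies in $\mathcal T$ by quotient-closure. Pulling back along $\pi: E \to E/t(E)$ gives a subobject $E' := \pi^{-1}(\image f) \subseteq E$ sitting in a short exact sequence
\[
 0 \to t(E) \to E' \to \image f \to 0
\]
whose outer terms both lie in $\mathcal T$; extension-closure forces $E' \in \mathcal T$. Since $f$ is nonzero, $E'$ strictly contains $t(E)$, contradicting the maximality of $t(E)$ in $\Sigma_E$. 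Hence no such $f$ exists, and $E/t(E) \in \mathcal F$.

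The only real obstacle is the existence of the maximal subobject $t(E)$, which is precisely where the noetherian hypothesis is indispensable; without it, the directed set $\Sigma_E$ need not contain a maximal element. Once $t(E)$ is in hand, the remaining verifications are formal consequences of the two closure hypotheses on $\mathcal T$.
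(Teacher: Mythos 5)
Your argument is correct and complete: the paper itself gives no proof of this lemma, citing \cite[Lemma 1.1.3]{Pol}, and your construction of $t(E)$ as a maximal element of the (directed, by quotient- and extension-closure) family of subobjects of $E$ lying in $\mathcal T$, with maximality guaranteed by the noetherian hypothesis, is exactly the standard argument found there. The verification that $E/t(E)\in\mathcal T^\circ$ via the pullback $\pi^{-1}(\image f)$ is also the expected step, so there is nothing to add.
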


Lemma \ref{lemma-Pol}, together with the observation that $\BB_n$ is closed under quotients and extensions in $\mathcal Z$, which we are assuming to be  noetherian, immediately gives:

\begin{coro}\label{coro1}
The pair $(\BB_n,\BB_n^\circ)$ is a torsion pair  in $\mathcal Z$.
\end{coro}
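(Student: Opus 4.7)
The plan is simply to invoke Lemma \ref{lemma-Pol}, once we have verified its two hypotheses for $\BB_n \subseteq \mathcal Z$, namely closure under extensions and closure under quotients. The category $\mathcal Z$ is noetherian by assumption, so this really is the whole story; the author essentially announces this in the sentence preceding the corollary.

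To check closure under extensions, I would start with a short exact sequence $0 \to A \to B \to C \to 0$ in $\mathcal Z$ with $A, C \in \BB_n$. Applying the derived functor $\Phi = R\Hom_{D(\mathcal Z)}(T,-)$ and passing to cohomology produces a long exact sequence in $\mathcal A$ whose relevant portion reads
\[
\Phi^n(A) \longrightarrow \Phi^n(B) \longrightarrow \Phi^n(C).
\]
By hypothesis the outer terms vanish, hence $\Phi^n(B)=0$ and $B \in \BB_n$.

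For closure under quotients, suppose $0 \to A \to B \to C \to 0$ is exact in $\mathcal Z$ with $B \in \BB_n$. The long exact sequence now contains the segment
\[
\Phi^n(B) \longrightarrow \Phi^n(C) \longrightarrow \Phi^{n+1}(A).
\]
The first term vanishes by assumption. The second vanishing is the key point and uses the standing hypothesis that $\mathcal Z$ has homological dimension $n$: identifying $\Phi^{n+1}(A)$ with $\Ext^{n+1}_{\mathcal Z}(T,A)$ via $\Phi = R\Hom_{D(\mathcal Z)}(T,-)$, this group is zero because no $\Ext^i_{\mathcal Z}$ survives outside $[0,n]$. Therefore $\Phi^n(C)=0$ and $C \in \BB_n$.

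Once these two closure properties are in hand, Lemma \ref{lemma-Pol} applied to $\mathcal W = \mathcal Z$ and $\mathcal T = \BB_n$ delivers a torsion pair whose torsion class is $\BB_n$. The corresponding torsion-free class is by construction $\{E \in \mathcal Z : \Hom_{\mathcal Z}(\BB_n, E) = 0\} = \BB_n^\circ$, which matches the statement. There is no real obstacle: the only non-formal ingredient is the homological dimension bound, which immediately kills $\Phi^{n+1}$ on any object of $\mathcal Z$.
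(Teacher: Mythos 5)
Your proposal is correct and follows exactly the paper's route: the paper deduces the corollary from Lemma \ref{lemma-Pol} together with the closure of $\BB_n$ under quotients and extensions (properties it records in Section \ref{sec1} as immediate from the definitions), and your long-exact-sequence verification of those two closure properties, using $\Phi^{n+1}(A)\cong\Ext^{n+1}_{\mathcal Z}(T,A)=0$ from the homological dimension bound, is precisely the omitted justification.
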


For basic properties of torsion pairs and tilting in an abelian category, the reader may refer to \cite[Chap. I, Sec. 2]{HRS}.

\subsection{When $\mathcal Z$ has homological dimension 1}

When  $\mathcal Z$ has homological dimension 1, we can regard the two hearts of t-structures  $\mathcal Z$ and  $\mathcal A$ as being related by a single tilt, as shown in the next lemma.

\begin{lemma}\label{lemma20}
Let $\mathcal Z$ be a noetherian abelian category of homological dimension 1.  Then
 \begin{itemize}
 \item[(a)] $\BB_1^\circ = \BB_0=\XX_1$, and $\BB_1=\XX_0$.
  \item[(b)] $\HH$ is the tilt of $\mathcal Z$ with respect to the torsion pair $(\BB_1, \BB_1^\circ)$.
  \end{itemize}
\end{lemma}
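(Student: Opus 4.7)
The plan is to reduce both parts of the lemma to a single structural description of the heart $\HH = \Psi(\mathcal A)$. The first equalities in part (a), namely $\BB_0 = \XX_1$ and $\BB_1 = \XX_0$, are tautological: since $\mathcal Z$ has homological dimension one, $\Phi^i E = \Ext^i_{\mathcal Z}(T,E) = 0$ for $E \in \mathcal Z$ and $i \notin \{0,1\}$, so the vanishing of $\Phi^0$ (respectively $\Phi^1$) is automatically equivalent to the stronger condition defining the corresponding $\XX_j$.

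The main technical step I would carry out is to prove that $F \in D(\mathcal Z)$ lies in $\HH$ if and only if $H^i F = 0$ for $i \notin \{-1,0\}$, $H^{-1}F \in \XX_1$, and $H^0 F \in \XX_0$. For the reverse direction, applying $\Phi$ to the triangle $H^{-1}F[1] \to F \to H^0 F$ places both outer terms in $\mathcal A[0]$ (using the torsion/free hypotheses), forcing $\Phi F \in \mathcal A[0]$. For the forward direction, I would let $k_0$ (respectively $k_1$) denote the highest (respectively lowest) degree with $H^{k_0}F \neq 0$, and apply $\Phi$ to the truncation triangle $\tau_{<k_0}F \to F \to H^{k_0}F[-k_0]$. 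An induction on the length of $\tau_{<k_0}F$ shows that $\Phi\tau_{<k_0}F$ has cohomology only in degrees $\leq k_0$, and combining this with $\Phi F \in \mathcal A[0]$, the long exact sequence at degrees $k_0$ and $k_0+1$ (assuming $k_0 \geq 1$, so that both positions lie outside degree $0$) would yield $\Phi^0 H^{k_0}F = 0$ and $\Phi^1 H^{k_0}F = 0$, hence $H^{k_0}F = 0$, contradicting the choice of $k_0$; so $k_0 \leq 0$. The dual truncation argument gives $k_1 \geq -1$, and a final LES run on $H^{-1}F[1] \to F \to H^0 F$ at degrees $-1$ and $1$ pins down the required memberships $H^{-1}F \in \XX_1$ and $H^0 F \in \XX_0$.

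Part (b) follows once $\BB_1^\circ = \BB_0$ is known, since the tilt of $\mathcal Z$ at $(\BB_1,\BB_0) = (\XX_0,\XX_1)$ is by definition the subcategory of $D(\mathcal Z)$ characterised above. To finish (a), I would apply $\Psi$ to the truncation triangle $\Phi^0 E \to \Phi E \to \Phi^1 E[-1]$ to obtain the triangle $\Psi\Phi^0 E \to E \to \Psi\Phi^1 E[-1]$ in $D(\mathcal Z)$. Both outer terms lie in $\HH$, so the structural result pins their cohomology into $\{-1,0\}$; then the LES of this triangle together with $H^{\pm 1}E = 0$ would force $\Psi^{-1}\Phi^0 E = 0 = \Psi^0 \Phi^1 E$ and extract a short exact sequence $0 \to \Psi^0\Phi^0 E \to E \to \Psi^{-1}\Phi^1 E \to 0$ in $\mathcal Z$, with first term in $\XX_0 = \BB_1$ and third in $\XX_1 = \BB_0$. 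Combined with the Hom-vanishing $\Hom_{\mathcal Z}(F,E) = \Ext^{-1}_{\mathcal A}(\Phi^0 F, \Phi^1 E) = 0$ for $F \in \XX_0$, $E \in \XX_1$ (immediate from the derived equivalence), this exhibits $(\BB_1,\BB_0)$ as a torsion pair, so $\BB_1^\circ = \BB_0$.

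The main obstacle I anticipate is the forward direction of the structural characterisation. The cohomological bookkeeping is routine, but the edge cases must be treated carefully: the LES forcing that rules out $k_0 \geq 1$ (respectively $k_1 \leq -2$) fails precisely at the two permitted positions $k_0 = 0$ (respectively $k_1 = -1$), and these are exactly where the torsion and torsion-free conditions are instead extracted.
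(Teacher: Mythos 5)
Your proposal is correct, but it reaches the lemma by a genuinely different route from the paper. The paper proves part (a) first and directly: $\BB_1^\circ\subseteq\BB_0$ because $T\in\BB_1$, and $\BB_0\subseteq\BB_1^\circ$ by factoring a map $G\to F$ through its image and running the long exact sequence for $\Phi$ on $0\to\kernel(\al)\to G\to\image(\al)\to 0$ to force $\image(\al)=0$; part (b) is then dispatched by the nested-hearts trick: $\Phi(\mathcal U_{1,1})\subseteq\langle\XX_1[1],\XX_0\rangle^{\Phi}\subseteq\mathcal A$, and two hearts of bounded t-structures with one contained in the other coincide. You instead first establish the full structural characterisation $\HH=\{F: H^iF=0 \text{ for } i\notin\{-1,0\},\ H^{-1}F\in\XX_1,\ H^0F\in\XX_0\}$, deduce (b) immediately, and then obtain (a) by exhibiting $0\to\Psi^0\Phi^0E\to E\to\Psi^{-1}\Phi^1E\to 0$ as an explicit torsion decomposition, with the Hom-vanishing $\Hom_{\mathcal Z}(\XX_0,\XX_1)=\Ext^{-1}_{\mathcal A}(\Phi^0F,\Phi^1E)=0$ coming straight from the derived equivalence. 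Your route costs more: the forward direction of the characterisation (the truncation induction ruling out $k_0\geq 1$ and $k_1\leq -2$) is exactly the work the paper's hearts-comparison argument avoids. But it also buys more: you get the explicit description of $\HH$ and of the torsion decomposition of every $E\in\mathcal Z$, and you construct the torsion pair $(\BB_1,\BB_0)$ directly rather than invoking Polishchuk's lemma (Corollary \ref{coro1}), so your argument does not actually use the noetherian hypothesis. Both proofs are sound; your edge-case bookkeeping at $k_0=0$ and $k_1=-1$ is handled correctly.
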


\begin{proof}
First, we show that $\mathcal B_1^\circ \subset \mathcal B_0$: take any $E \in \mathcal B_1^\circ$.  Since $T \in \mathcal B_1$, we have $\Hom (T,E)=0$, i.e.\ $E \in \mathcal B_0$.  Hence $\mathcal B_1^\circ \subset \mathcal B_0$ holds.

Next we show that $\mathcal B_0 \subset \mathcal B_1^\circ$.  Take any $F \in \mathcal B_0$, any $G \in \mathcal B_1$, and  any morphism $\al : G \to F$ in $\mathcal Z$.  Since $\mathcal B_0$ is closed under subobjects in $\mathcal Z$, we get $\image (\al) \in \mathcal B_0$.  On the other hand, after applying $\Phi = R\Hom (T,-)$ to the short exact sequence in $\mathcal Z$
\[
 0 \to \kernel (\al) \to G \to \image (\al) \to 0,
\]
part of the long exact sequence of cohomology is
\[
\Hom (T,G[1]) \to \Hom (T,\image (\al)[1]) \to \Hom (T,\kernel (\al)[2]).
\]
Since $\Hom (T,\kernel (\al)[2])=0$ (we are assuming $\mathcal Z$ to have homological dimension 1) and $G \in \mathcal B_1$, we get $\Hom (T,\image (\al)[1])=0$.  Hence $\Hom (T, \image (\al)[i])=0$ for all $i$, which forces $\image \al=0$, i.e.\ $\al$ is the zero map.  Hence $\mathcal B_0 \subset \mathcal B_1^\circ$, and we have $\mathcal B_1^\circ = \mathcal B_0$.  That $\mathcal B_0=\mathcal X_1$ and $\BB_1=\XX_0$ are clear.  This completes the proof of part (a).

To prove part (b),  let $\mathcal U_{1,1}$ denote the heart obtained by tilting $\mathcal Z$ with respect to $(\BB_1, \BB_1^\circ)$.  That is, $\mathcal U_{1,1} = \langle \BB_1^\circ[1], \BB_1\rangle$, i.e.\ $\mathcal U_{1,1} = \langle \XX_1 [1], \XX_0 \rangle$ by part (a).  Thus $\Phi (\mathcal U_{1,1}) \subset \mathcal A$. Since both $\Phi (\mathcal U_{1,1})$ and $\mathcal A$ are hearts of bounded t-structures, they must be equal, i.e.\ $\mathcal U_{1,1} = \mathcal H$, proving part (b).
\end{proof}

\begin{remark}\label{remark3}
When $X$ is a smooth projective curve, $\mathcal Z = \Coh (X)$ and $T$ is a tilting sheaf on $X$, Lemma \ref{lemma20} says that the two hearts $\Coh (X)$ and $\module A^{op}$ (where $A$ is the endomorphism algebra of $T$) differ by a single tilt.
\end{remark}

\subsection{When $\mathcal Z$ has homological dimension 2}

When $\mathcal Z$ has homological dimension 2 and  the spectral condition holds, we will again show that $\mathcal Z$ and $\mathcal A$ are related by tilting; this time, they are related by a sequence of two tilts.

\begin{lemma}\label{lemma6}
Suppose $\mathcal Z$ is a noetherian abelian category of homological dimension 2. Then $\BB_2^\circ = \BB_0 \cap \XX_1^\circ$, and so $(\BB_2,\BB_0 \cap \XX_1^\circ)$ is a torsion pair in $\mathcal Z$.
\end{lemma}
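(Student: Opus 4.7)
The plan is to first note that since $\mathcal Z$ is noetherian of homological dimension $n=2$ and $\BB_2$ is closed under quotients and extensions, Corollary \ref{coro1} already gives the torsion pair $(\BB_2, \BB_2^\circ)$. So everything reduces to proving the identity $\BB_2^\circ = \BB_0 \cap \XX_1^\circ$ as subcategories of $\mathcal Z$. The key observation I intend to exploit repeatedly is that in homological dimension $2$, we have $\XX_1 = \BB_0 \cap \BB_2$, since the only cohomology degrees that can be nonvanishing are $0,1,2$.

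For the inclusion $\BB_2^\circ \subseteq \BB_0 \cap \XX_1^\circ$: take $E \in \BB_2^\circ$. Because $T$ is tilting, $\Phi^2(T) = \Ext^2_{\mathcal Z}(T,T) = 0$, so $T \in \BB_2$; hence $\Hom(T,E) = \Phi^0(E) = 0$, giving $E \in \BB_0$. On the other hand $\XX_1 \subseteq \BB_2$ (again by the dimension observation), so $\XX_1^\circ \supseteq \BB_2^\circ$, yielding $E \in \XX_1^\circ$.

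For the reverse inclusion $\BB_0 \cap \XX_1^\circ \subseteq \BB_2^\circ$: let $E \in \BB_0 \cap \XX_1^\circ$, let $G \in \BB_2$, and let $\alpha : G \to E$ be a morphism in $\mathcal Z$. Form the short exact sequence
\[
 0 \to \kernel (\alpha) \to G \to \image (\alpha) \to 0.
\]
Since $\BB_2$ is closed under quotients in $\mathcal Z$, we get $\image (\alpha) \in \BB_2$. Since $\image (\alpha)$ is a subobject of $E \in \BB_0$ and $\BB_0$ is closed under subobjects in $\mathcal Z$, we also get $\image (\alpha) \in \BB_0$. Thus $\image (\alpha) \in \BB_0 \cap \BB_2 = \XX_1$. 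But $E \in \XX_1^\circ$, so the inclusion $\image (\alpha) \hookrightarrow E$ must vanish, forcing $\alpha = 0$. Hence $E \in \BB_2^\circ$.

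Combining both inclusions gives $\BB_2^\circ = \BB_0 \cap \XX_1^\circ$, and the torsion pair statement follows from Corollary \ref{coro1}. I do not anticipate a real obstacle here: the argument is parallel to part (a) of Lemma \ref{lemma20} but slightly cleaner, because we do not need a long exact sequence computation — the extra closure properties ($\BB_0$ under subobjects, $\BB_2$ under quotients) together with the dimension-$2$ identity $\BB_0 \cap \BB_2 = \XX_1$ do all the work.
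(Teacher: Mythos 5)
Your proof is correct and follows essentially the same route as the paper: both directions use $T \in \BB_2$ and $\XX_1 \subseteq \BB_2$ for the forward inclusion, and the closure of $\BB_2$ under quotients and $\BB_0$ under subobjects to place $\image(\alpha)$ in $\BB_0 \cap \BB_2 = \XX_1$ for the reverse. The only difference is that you make the dimension-two identity $\BB_0 \cap \BB_2 = \XX_1$ explicit, which the paper uses silently.
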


\begin{proof}
Take any $E \in \BB_2^\circ$.  Since $T \in \BB_2$, we have $\Hom (T,E)=0$, i.e.\ $E \in \BB_0$.  Hence $\BB_2^\circ \subseteq \BB_0$.  On the other hand, that $\XX_1 \subseteq \BB_2$ implies $\BB_2^\circ \subseteq \XX_1^\circ$.  Hence $\BB_2^\circ \subseteq \BB_0 \cap \XX_1^\circ$.

To show the other inclusion, take any $F \in \BB_0 \cap \XX_1^\circ$, any $G \in \BB_2$ and any morphism $\al : G \to F$ in $\mathcal Z$.  We have the exact sequence in $\mathcal Z$
\begin{equation*}
0 \to \kernel (\al) \to G \overset{\al}{\to} F \to \cokernel (\al) \to 0.
\end{equation*}
Since $\BB_2$ is closed under quotients in $\mathcal Z$ while $\BB_0$ is closed under subobjects in $\mathcal Z$, we have  $\image (\al) \in \BB_0 \cap \BB_2$, i.e.\ $\image (\al) \in \XX_1$.  However, $F \in \XX_1^\circ$, so $\al$ must be the zero morphism.  This completes the proof that $\BB_2^\circ = \BB_0 \cap \XX_1^\circ$.  By Corollary \ref{coro1}, we see that $(\BB_2, \BB_0 \cap \XX_1^\circ)$ is a torsion pair in $\mathcal Z$.
\end{proof}

\begin{proposition}\label{pro2}
Let $\mathcal U_{2,1}$ denote the heart obtained by tilting $\mathcal Z$ with respect to the torsion pair $(\BB_2, \BB_0 \cap \XX_1^\circ)$.  Then $\HH$ can be obtained from $\mathcal U_{2,1}$ by a tilt with respect to a torsion pair in $\mathcal U_{2,1}$.
\end{proposition}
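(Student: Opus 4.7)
The plan is to verify the Happel-Reiten-Smal\o{} criterion: given hearts of two bounded t-structures $\mathcal{U}$ and $\mathcal{V}$ on the same triangulated category, $\mathcal{V}$ is the tilt of $\mathcal{U}$ along a torsion pair in $\mathcal{U}$ iff $\mathcal{V} \subset \langle \mathcal{U}[1], \mathcal{U} \rangle$, in which case the torsion pair is $(\mathcal{V} \cap \mathcal{U},\ \{F \in \mathcal{U} : F[1] \in \mathcal{V}\})$. Taking $\mathcal{U} = \mathcal{U}_{2,1}$ and $\mathcal{V} = \HH$, the proposition reduces to showing that every $H \in \HH$ has $\mathcal{U}_{2,1}$-cohomology concentrated in degrees $-1$ and $0$.

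I translate this concentration condition in terms of the standard heart. Since $\mathcal{U}_{2,1}$ is the tilt of $\mathcal{Z}$ along $(\BB_2, \BB_0 \cap \XX_1^\circ)$, an object $X \in D(\mathcal{Z})$ has $\mathcal{U}_{2,1}$-cohomology concentrated in $\{-1, 0\}$ iff $H^i_{\mathcal{Z}}(X) = 0$ for $i \notin [-2, 0]$, $H^0_{\mathcal{Z}}(X) \in \BB_2$, and $H^{-2}_{\mathcal{Z}}(X) \in \BB_0 \cap \XX_1^\circ$. For $H = \Psi(M)$ the $\mathcal{Z}$-cohomology sheaves are precisely the $\Psi^i(M)$, so it suffices to show $\Psi^0(M) \in \BB_2$, $\Psi^{-2}(M) \in \BB_0 \cap \XX_1^\circ$, and $\Psi^i(M) = 0$ for $i \notin [-2, 0]$ (the last of which is part of the setup of the derived equivalence).

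The main work is to extract these vanishings from the spectral sequence \eqref{eq6}. Because $\mathcal{Z}$ has homological dimension $2$, only the columns $p = 0, 1, 2$ on the $E_2$-page are nonzero, and a degree count shows $d_r = 0$ for all $r \geq 3$, so the sequence degenerates at $E_3$. The required $E_\infty = 0$ in total degrees $\pm 2$ immediately forces $\Phi^2(\Psi^0 M) = 0$ and $\Phi^0(\Psi^{-2} M) = 0$, placing $\Psi^0(M)$ in $\BB_2$ and $\Psi^{-2}(M)$ in $\BB_0$. For total degree $-1$, the entry $E_2^{1, -2} = \Phi^1(\Psi^{-2}(M))$ has vanishing incoming $d_2$ (source $E_2^{-1, -1} = 0$) and vanishing outgoing $d_2$ (target $E_2^{3, -3} = 0$), so it survives to $E_\infty^{1, -2} = 0$, giving $\Phi^1(\Psi^{-2} M) = 0$; combined with the $\BB_0$-vanishing, this yields $\Psi^{-2}(M) \in \XX_2$. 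Finally, for any $E \in \XX_1$ and $F \in \XX_2$,
\[
\Hom_{\mathcal{Z}}(E, F) \cong \Hom_{D(\mathcal{A})}(\Phi^1(E)[-1], \Phi^2(F)[-2]) = \Ext^{-1}_{\mathcal{A}}(\Phi^1 E, \Phi^2 F) = 0,
\]
so $\XX_2 \subseteq \XX_1^\circ$, completing the amplitude check.

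The main obstacle I anticipate is this last refinement: showing $\Psi^{-2}(M) \in \XX_1^\circ$ requires stringing together vanishings from two different total degrees of the spectral sequence with the symmetry observation $\XX_2 \subseteq \XX_1^\circ$, whereas the other bullets are essentially immediate from convergence of the spectral sequence. Once the amplitude statement is in hand, the remainder of the argument is purely formal via HRS.
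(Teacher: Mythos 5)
Your proof is correct, but it runs the argument in the opposite direction from the paper, and the comparison is instructive. Both proofs ultimately rest on the same criterion (Proposition \ref{pro1}, quoted from \cite{BMT}): two hearts differ by a single tilt as soon as one is contained in the extension closure of the other and its shift. The paper applies this with $\mathcal A = \HH[-1]$ and $\mathcal B = \Phi^{-1}$-image of $\mathcal U_{2,1}$: since $\mathcal U_{2,1} = \langle (\BB_0 \cap \XX_1^\circ)[1], \BB_2\rangle$ and the definitions of $\BB_2$ and $\BB_0$ say exactly that $\Phi$ of these pieces has $\mathcal A$-amplitude in $[0,1]$ and $[1,2]$ respectively, the containment $\Phi(\mathcal U_{2,1}) \subseteq \langle \mathcal A, \mathcal A[-1]\rangle$ is immediate from the definitions --- no spectral sequence is needed, and indeed the spectral condition is only imposed starting from Lemma \ref{lemma7}. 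You instead verify the reverse containment $\HH \subseteq \langle \mathcal U_{2,1}[1], \mathcal U_{2,1}\rangle$, which forces you to compute the $\mathcal Z$-cohomologies of $\Psi M$; your translation of ``$\mathcal U_{2,1}$-amplitude in $[-1,0]$'' into the three conditions $\Psi^i M = 0$ for $i \notin [-2,0]$, $\Psi^0 M \in \BB_2$, $\Psi^{-2}M \in \BB_0 \cap \XX_1^\circ$ is correct, and your spectral-sequence extraction of $\Phi^2(\Psi^0 M) = 0$ and $\Phi^0(\Psi^{-2}M) = \Phi^1(\Psi^{-2}M) = 0$ is exactly the content of the paper's Lemma \ref{lemma9}, supplemented by your clean observation that $\XX_2 \subseteq \XX_1^\circ$ via vanishing of negative Ext groups in $\mathcal A$. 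The trade-off: your route costs the spectral condition as an extra hypothesis (which the ambient subsection does assume, but which the paper's own proof of this proposition manages to avoid) and is longer; in exchange it proves Lemma \ref{lemma9} en route and pins down the admissible $\mathcal U_{2,1}$-cohomologies of objects of $\HH$ explicitly, information of the kind the paper only develops later when it identifies the torsion pair concretely in Lemma \ref{lemma10}. If you want the leanest argument, check the containment on the $\mathcal A$-side, where it is definitional.
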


\begin{proof}
Since $\mathcal U_{2,1} = \langle (\BB_0 \cap \XX_1^\circ) [1], \BB_2 \rangle$, we see that $\Phi (\mathcal U_{2,1}) \subseteq \langle \mathcal A, \mathcal A [-1]\rangle$.  Thus, by Proposition \ref{pro1} below, we know $\Phi (\mathcal U_{2,1})$ is a tilt of $\mathcal A [-1]$, i.e.\ $\mathcal U_{2,1}$ is a tilt of $\HH [-1]$ with respect to the torsion pair $(\mathcal T, \mathcal F)$ in $\HH [-1]$, where
\begin{align}
  \Phi \mathcal T &= \mathcal A[-1] \cap \Phi (\mathcal U_{2,1}), \notag\\
  \Phi \mathcal F &= \mathcal A[-1] \cap \Phi (\mathcal U_{2,1})[-1], \label{eq2}
\end{align}
i.e.\
\begin{align*}
 \mathcal T &= \mathcal U_{2,1} \cap (\HH [-1]), \\
 \mathcal F &= (\mathcal U_{2,1}\cap \HH)[-1].
\end{align*}
\end{proof}

\begin{proposition}\cite[Proposition 2.3.2(b)]{BMT}\label{pro1}
If $\mathcal A, \mathcal B$ are the hearts of two bounded t-structures on a triangulated category $\mathcal D$, and $\mathcal B \subset \langle \mathcal A, \mathcal A [1]\rangle$, then
\begin{equation*}
  \mathcal T := \mathcal A \cap \mathcal B \text{\quad and \quad}  \mathcal F := \mathcal A \cap \mathcal B [-1]
   \end{equation*}
    form a torsion pair in $\mathcal A$, and $\mathcal B$ is the tilt of $\mathcal A$ with respect to the torsion pair $(\mathcal T, \mathcal F)$.
\end{proposition}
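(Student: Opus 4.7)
The plan is to reveal $\mathcal{B}$ as a tilt of $\mathcal{A}$ by using the $\mathcal{B}$-truncations to build the torsion pair, and then to identify the tilted heart with $\mathcal{B}$ by a standard comparison of hearts. First I would convert the hypothesis $\mathcal{B} \subset \langle \mathcal{A}, \mathcal{A}[1]\rangle$ into the symmetric statement $\mathcal{A} \subset \langle \mathcal{B}, \mathcal{B}[-1]\rangle$. The hypothesis says precisely that every $B \in \mathcal{B}$ has $\mathcal{A}$-cohomology concentrated in degrees $-1$ and $0$; combined with the $\Hom$-vanishing $\Hom(\mathcal{A}^{\leq -1}, \mathcal{A}^{\geq 0}) = 0$ (and its shift), this yields $\Hom(\mathcal{B}[k], A) = 0$ for $k \geq 1$ and $\Hom(A, \mathcal{B}[-k]) = 0$ for $k \geq 2$ whenever $A \in \mathcal{A}$, which is the same as $\mathcal{A} \subset \mathcal{B}^{[0,1]} = \langle \mathcal{B}, \mathcal{B}[-1]\rangle$.

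Next I would verify that $\Hom_{\mathcal{A}}(\mathcal{T}, \mathcal{F}) = 0$: if $T \in \mathcal{T} \subset \mathcal{B} \subset \mathcal{B}^{\leq 0}$ and $F \in \mathcal{F} \subset \mathcal{B}[-1] \subset \mathcal{B}^{\geq 1}$, then $\Hom_{\mathcal{D}}(T,F)=0$ by the orthogonality axiom of the t-structure whose heart is $\mathcal{B}$; fullness of $\mathcal{A}$ in $\mathcal{D}$ transports this to the required vanishing in $\mathcal{A}$.

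For the decomposition, given $E \in \mathcal{A}$, the previous step places $E$ in $\langle \mathcal{B}, \mathcal{B}[-1]\rangle$, so the $\mathcal{B}$-truncation triangle takes the form
\[
T \longrightarrow E \longrightarrow F \longrightarrow T[1]
\]
with $T := \tau^{\leq 0}_{\mathcal{B}} E = H^0_{\mathcal{B}}(E) \in \mathcal{B}$ and $F := \tau^{\geq 1}_{\mathcal{B}} E = H^1_{\mathcal{B}}(E)[-1] \in \mathcal{B}[-1]$. The hypothesis forces $T \in \mathcal{A}^{[-1,0]}$ and $F \in \mathcal{A}^{[0,1]}$, so to complete the placements $T \in \mathcal{T}$ and $F \in \mathcal{F}$ I only need to kill $H^{-1}_{\mathcal{A}}(T)$ and $H^{1}_{\mathcal{A}}(F)$. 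Both vanish by running the long exact sequence of $\mathcal{A}$-cohomology through the triangle: the neighbours $H^{-1}_{\mathcal{A}}(E)$ and $H^{1}_{\mathcal{A}}(E)$ vanish because $E \in \mathcal{A}$, while $H^{-2}_{\mathcal{A}}(F) = 0$ and $H^{2}_{\mathcal{A}}(T) = 0$ by the concentration ranges just noted. The triangle then corresponds to a short exact sequence $0 \to T \to E \to F \to 0$ in $\mathcal{A}$, giving the torsion pair decomposition.

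Finally, the tilt $\mathcal{A}^{\#} := \langle \mathcal{F}[1], \mathcal{T}\rangle$ is contained in $\mathcal{B}$ since $\mathcal{T} \subset \mathcal{B}$, $\mathcal{F}[1] \subset \mathcal{B}$, and $\mathcal{B}$ is extension-closed; the standard fact that any two bounded t-structure hearts with one contained in the other must coincide (proved by a direct $\Hom$-vanishing argument via truncations) then gives $\mathcal{A}^{\#} = \mathcal{B}$. I expect the main obstacle to be the third step: the careful bookkeeping of the long exact sequences needed to force the $\mathcal{B}$-truncations of $E$ back into $\mathcal{A}$. Once that is handled, the remaining steps are essentially formal.
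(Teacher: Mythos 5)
The paper does not prove this proposition at all---it is quoted verbatim from Bayer--Macr\`{i}--Toda \cite{BMT} and used as a black box---so there is no in-paper argument to compare against. Your proof is correct and is essentially the standard one: each step checks out, including the translation of $\mathcal B\subset\langle\mathcal A,\mathcal A[1]\rangle$ into $\mathcal A\subset\langle\mathcal B,\mathcal B[-1]\rangle$ via the orthogonality relations of bounded t-structures, the use of the $\mathcal B$-truncation triangle plus the long exact sequence of $\mathcal A$-cohomology to land $T$ and $F$ back in $\mathcal A$, and the final identification $\mathcal A^{\#}=\mathcal B$ from the fact that nested hearts of bounded t-structures coincide (a fact this paper itself invokes in the proof of Lemma \ref{lemma20}).
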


When the spectral condition holds, we can say a little more about the cohomology objects $\Phi^0 E$ and $\Phi^2E$ for any $E \in \mathcal Z$:

 \begin{lemma}\label{lemma7}
 Suppose $\mathcal Z$ is a noetherian abelian category of homological dimension 2 and  the spectral condition holds.  Then for any $E \in \mathcal Z$, we have $\Phi^0(E) \in \YY_0$ and $\Phi^2E \in \YY_{-2}$.  If $E \in \BB_0$, then $\Psi^{-2}(\Phi^1 E)=0$.
 \end{lemma}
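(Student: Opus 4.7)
The plan is to derive all three statements from the convergent spectral sequence \eqref{eq1} applied to $E \in \mathcal{Z}$. Because $\mathcal{Z}$ has homological dimension $2$, the cohomology $\Phi^q E = \Ext^q_{\mathcal{Z}}(T,E)$ vanishes outside $q \in \{0,1,2\}$; similarly, since $\mathcal{A}$ has global dimension $2$ and $\Psi A \cong T$ sits in degree $0$, the cohomology $\Psi^p M$ for $M \in \mathcal{A}$ vanishes outside $p \in \{-2,-1,0\}$. Thus the only potentially nonzero terms $E_2^{p,q} = \Psi^p(\Phi^q E)$ of \eqref{eq1} lie in the $3 \times 3$ box $-2 \le p \le 0$, $0 \le q \le 2$, and the $d_2$ differential $E_2^{p,q} \to E_2^{p+2,q-1}$ is the only differential with any chance of being nonzero in this box; all higher differentials land outside the box, so $E_3 = E_\infty$.

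First I would use convergence in the total degrees $n = p + q$ with $n \neq 0$. In total degree $2$ only $(p,q) = (0,2)$ survives, and in total degree $-2$ only $(-2,0)$ survives, so both terms vanish at $E_2$; this already gives $\Psi^0(\Phi^2 E) = 0$ and $\Psi^{-2}(\Phi^0 E) = 0$. In total degree $1$ the contributing spots are $(-1,2)$ and $(0,1)$; since no $d_2$ either enters or leaves $(-1,2)$ (source and target are outside the box), $E_2^{-1,2} = \Psi^{-1}(\Phi^2 E)$ must itself be zero for convergence. Dually, in total degree $-1$ the contributing spots are $(-2,1)$ and $(-1,0)$; no $d_2$ enters or leaves $(-1,0)$, so $\Psi^{-1}(\Phi^0 E) = 0$. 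Combining the four vanishings yields $\Phi^0 E \in \YY_0$ and $\Phi^2 E \in \YY_{-2}$.

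For the last statement, the hypothesis $E \in \BB_0$ means $\Phi^0 E = 0$, so in particular $E_2^{0,0} = \Psi^0(\Phi^0 E) = 0$. The $d_2$ differential $d_2 \colon E_2^{-2,1} \to E_2^{0,0}$ is therefore the zero map, so $E_3^{-2,1} = E_2^{-2,1}$. But the total-degree $-1$ analysis above forces $E_\infty^{-2,1} = 0$, hence $E_2^{-2,1} = \Psi^{-2}(\Phi^1 E) = 0$, as required.

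The execution is essentially a bookkeeping exercise; the one place where care is needed is to verify that the $d_2$ from $(-2,1)$ to $(0,0)$ is the only differential that could obstruct the vanishing of $\Psi^{-2}(\Phi^1 E)$ in total degree $-1$, and to confirm that no later differential can resurrect the terms we have just killed. Both of these follow from the shape of the $3 \times 3$ support, so no genuine obstacle arises beyond correctly tracking the spectral sequence.
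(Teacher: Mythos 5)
Your proof is correct and takes essentially the same approach as the paper: both arguments read off the vanishing of the boundary terms $E_2^{0,2}$, $E_2^{-1,2}$, $E_2^{-2,0}$, $E_2^{-1,0}$ (and, when $E \in \BB_0$, of $E_2^{-2,1}$, whose only potentially nonzero $d_2$ lands in $E_2^{0,0}=0$) from the $3\times 3$ support of the spectral sequence \eqref{eq1} together with its convergence to zero in nonzero total degree. The paper's version is merely terser, citing \cite[Lemma 5]{JMS} for the support bounds that you rederive.
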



\begin{proof}
Consider the spectral sequence \eqref{eq1} for $E \in \mathcal Z$.  Since $\mathcal Z$ has homological dimension 2, along with \cite[Lemma 5]{JMS}, we have $E_2^{p,q}=0$ unless $-2 \leq p \leq 0$ and   $0\leq q \leq 2$.  That $E_2^{p,q} \Rightarrow 0$ for $p+q \neq 0$ implies that $E_2^{-2,0}, E_2^{-1,0}, E_2^{-1,2}$ and $E_2^{0,2}$ all vanish, giving us the first claim.  If $E \in \mathcal B_0$, then $E_2^{-2,1} = E_\infty^{-2,1}$ also vanishes.
\end{proof}

Similarly, we have:

\begin{lemma}\label{lemma9}
Under the hypotheses of Lemma \ref{lemma7},  for any $M \in \mathcal A$, $\Psi^0(M) \in \XX_0$ and $\Psi^{-2}(M) \in \XX_2$.
\end{lemma}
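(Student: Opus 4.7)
The plan is to imitate the proof of Lemma \ref{lemma7}, but starting from the dual spectral sequence. Specifically, I would apply \eqref{eq6} to $M \in \mathcal A$, which reads $E_2^{p,q} = \Phi^p(\Psi^q M)$ and converges to $M$ in total degree $0$ and to $0$ otherwise. The four vanishings we want, namely $\Phi^j(\Psi^0 M) = 0$ for $j = 1,2$ and $\Phi^j(\Psi^{-2} M) = 0$ for $j = 0,1$, sit at four explicit positions of the $E_2$-page.

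First I would pin down the support of the $E_2$-page. Since $\mathcal Z$ has homological dimension $2$, $\Phi^p(N) = 0$ for $p \notin [0,2]$ for every $N \in \mathcal Z$; together with the $\Psi$-bound from \cite[Lemma 5]{JMS}, this forces $E_2^{p,q} = 0$ outside the $3 \times 3$ block $0 \leq p \leq 2$, $-2 \leq q \leq 0$. The conclusion $\Psi^0 M \in \XX_0$ is exactly the vanishing of the two entries $E_2^{1,0}$ and $E_2^{2,0}$ on the top edge of the block, while $\Psi^{-2} M \in \XX_2$ is exactly the vanishing of $E_2^{0,-2}$ and $E_2^{1,-2}$ on the bottom edge. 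Each of these four positions has total degree $\neq 0$, so convergence already gives the vanishing of the corresponding $E_\infty$-terms.

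The only remaining point, and the one place where one might worry about an obstacle, is to verify that the spectral sequence is already stationary at each of these four corner positions from the $E_2$-page onwards. That amounts to checking, for each $r \geq 2$, that every differential $d_r$ into or out of one of the four positions lands in a zero entry. This is automatic from the support constraint: each such $d_r$ either originates in a column with $p < 0$ or a row with $q > 0$, or lands in a column with $p > 2$ or a row with $q < -2$. Hence $E_r = E_2$ at the four positions of interest, and the $E_2$-entries themselves vanish, completing the argument. I expect no real obstacle here; this is the direct mirror of the bookkeeping already carried out for Lemma \ref{lemma7}, applied to the four symmetric corners of the $E_2$-page of \eqref{eq6}.
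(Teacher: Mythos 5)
Your proposal is correct and is exactly the argument the paper intends: Lemma \ref{lemma9} is stated with only the word ``Similarly,'' and the intended proof is the mirror of the proof of Lemma \ref{lemma7} applied to the spectral sequence \eqref{eq6}, reading off the vanishing of $E_2^{1,0}$, $E_2^{2,0}$, $E_2^{0,-2}$, $E_2^{1,-2}$ from the support bounds and convergence. Your extra check that these four positions are stationary from the $E_2$-page onward is accurate and fills in the bookkeeping the paper leaves implicit.
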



As a result, we can write the torsion pair in Lemma \ref{lemma6} in a different way:

\begin{lemma}\label{lemma8}
Suppose $\mathcal Z$ is a noetherian abelian category of homological dimension 2 and the spectral condition holds.  Then  $\BB_0 = \XX_0^\circ$, and so $(\BB_2, \XX_0^\circ \cap \XX_1^\circ)$ is a torsion pair in $\mathcal Z$.
\end{lemma}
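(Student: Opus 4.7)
The plan is to prove the equality $\BB_0 = \XX_0^\circ$; once this is in hand, the torsion pair claim is automatic, since substituting $\XX_0^\circ$ for $\BB_0$ in the torsion pair $(\BB_2, \BB_0 \cap \XX_1^\circ)$ produced by Lemma \ref{lemma6} yields exactly $(\BB_2, \XX_0^\circ \cap \XX_1^\circ)$. The inclusion $\XX_0^\circ \subseteq \BB_0$ is easy: the tilting hypothesis $\Ext^i_{\mathcal Z}(T,T)=0$ for $i \neq 0$ gives $T \in \XX_0$, so any $E \in \XX_0^\circ$ satisfies $\Phi^0 E = \Hom_{\mathcal Z}(T,E) = 0$, i.e.\ $E \in \BB_0$.

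For the reverse inclusion $\BB_0 \subseteq \XX_0^\circ$, my approach is to transport the vanishing $\Phi^0 E = 0$ across the equivalence $\Phi$ into the standard t-structure on $D(\mathcal A)$. Given $E \in \BB_0$, I would first observe that $\Phi^j E = 0$ for every $j<0$ as well: since $T \in \mathcal Z \subseteq D^{\leq 0}(\mathcal Z)$ and $E[j] \in D^{\geq 1}(\mathcal Z)$ for $j<0$, the t-structure vanishing axiom gives $\Phi^j E = \Hom_{D(\mathcal Z)}(T, E[j]) = 0$. Combined with the hypothesis $\Phi^0 E = 0$ and the bound $\Phi^j E = 0$ for $j>2$ forced by the homological dimension $2$ assumption, this shows $\Phi E \in D^{\geq 1}(\mathcal A)$. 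Now for any $F \in \XX_0$ the complex $\Phi F$ is concentrated in degree $0$, hence lies in $D^{\leq 0}(\mathcal A)$, so applying the t-structure vanishing axiom on $D(\mathcal A)$ gives
\[
  \Hom_{\mathcal Z}(F,E) \;\cong\; \Hom_{D(\mathcal A)}(\Phi F, \Phi E) \;=\; 0,
\]
and therefore $E \in \XX_0^\circ$.

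I do not anticipate a serious obstacle: the argument is essentially formal, converting the defining vanishing of $\BB_0$ into a vanishing of $\Hom$ groups through the t-structure. In particular, the spectral condition from Remark \ref{remark6} is not actually needed in this step, appearing here only through the framing of the surrounding results, and the homological dimension $2$ hypothesis is used only to truncate $\Phi E$ on the right so that it genuinely lies in $D^{\geq 1}(\mathcal A)$.
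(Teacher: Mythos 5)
Your proof is correct, but it reverses which inclusion carries the weight relative to the paper's own argument, and in doing so it is genuinely more elementary. The paper dismisses $\BB_0 \subseteq \XX_0^\circ$ as clear (your t-structure computation is exactly the implicit justification) and spends its effort on $\XX_0^\circ \subseteq \BB_0$, which it derives from Lemma \ref{lemma7} --- hence from the spectral condition --- by applying $\Psi$ to the truncation triangle $\Phi^0 E \to \Phi E \to \tau^{\geq 1}(\Phi E)$, noting $\Psi(\Phi^0 E) \in \XX_0$, and concluding that the map $\Psi(\Phi^0E) \to E$ must vanish when $E \in \XX_0^\circ$, forcing $\Phi^0 E = 0$. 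You instead get $\XX_0^\circ \subseteq \BB_0$ essentially for free from the single observation $T \in \XX_0$, in the same spirit as the paper's own use of $T \in \BB_1$ and $T \in \BB_2$ in Lemmas \ref{lemma20} and \ref{lemma6}, but which the paper does not exploit here. Your route buys two things: it bypasses the spectral condition entirely, confirming your closing remark, and the equality $\BB_0 = \XX_0^\circ$ in fact holds in any finite homological dimension by your argument --- the dimension-two hypothesis enters only through Lemma \ref{lemma6} for the torsion-pair conclusion. One microscopic quibble with your commentary: homological dimension $2$ is not needed even to truncate on the right, since $\Phi E \in D^{\geq 1}(\mathcal A)$ already follows from the vanishing of $\Phi^j E$ for all $j \leq 0$.
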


\begin{proof}
That $\BB_0 \subseteq \XX_0^\circ$ is clear.  Given any $E \in \mathcal Z$, we have   $\Phi^0 E \in \YY_0$ by Lemma \ref{lemma7}.  If we apply $\Psi$ to the canonical exact triangle in $D(\mathcal A)$
\[
\Phi^0 E \to \Phi E \to \tau^{\geq 1}(\Phi E),
\]
we get the exact triangle in $D(\mathcal Z)$
\[
 \Psi (\Phi^0E) \to E \to \Psi (\tau^{\geq 1}(\Phi E))
\]
where $\Psi (\Phi^0 E) \in \XX_0$ by Lemma \ref{lemma7}.  If $E \in \XX_0^\circ$, then $\Phi^0 E$ must be zero, i.e.\ $E \in \BB_0$.  Hence $\XX_0^\circ \subseteq \BB_0$.  By Lemma \ref{lemma6}, we are done.
\end{proof}

The following lemma gives another description of the tilt from $\mathcal U_{2,1}$ to $\mathcal H$ or, equivalently, from $\mathcal A$ to $\Phi (\mathcal U_{2,1})[1]$, which is perhaps more illuminating than the description in Proposition \ref{pro2}.

%

\begin{lemma}\label{lemma10}
The heart $\Phi (\mathcal U_{2,1})[1]$ can be obtained from $\mathcal A$ by tilting with respect to the torsion pair
\[
(\Phi\mathcal T[1], \Phi \mathcal F[1]) = (\CC_0, \YY_{-1}^\circ \cap \YY_{-2}^\circ).
\]
\end{lemma}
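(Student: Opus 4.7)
The plan is to apply the description \eqref{eq2} of $\mathcal T, \mathcal F$, shift by $[1]$, and then identify the resulting subcategories of $\mathcal A$. Shifting \eqref{eq2} gives
\[
\Phi\mathcal T[1] = \mathcal A \cap \Phi(\mathcal U_{2,1})[1], \qquad \Phi\mathcal F[1] = \mathcal A \cap \Phi(\mathcal U_{2,1}).
\]
For $M \in \mathcal A$, membership in either side is controlled via $\Psi$ by whether $\Psi(M)[-1]$ (resp.\ $\Psi M$) lies in $\mathcal U_{2,1} = \langle (\BB_0 \cap \XX_1^\circ)[1], \BB_2\rangle$; reading off the $\mathcal Z$-cohomology sheaves in the relevant degrees and then simplifying via Lemmas~\ref{lemma8} and~\ref{lemma9} is the main step.

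For the torsion class, $M \in \Phi\mathcal T[1]$ unpacks to $\Psi^0 M = 0$, $\Psi^{-1}M \in \BB_2$, and $\Psi^{-2}M \in \BB_0 \cap \XX_1^\circ$. By Lemma~\ref{lemma9}, $\Psi^{-2}M$ lies in $\XX_2$, which sits inside $\BB_0 \cap \XX_1^\circ$: the inclusion $\XX_2 \subseteq \XX_1^\circ$ holds because morphisms from $\XX_1$ to $\XX_2$ correspond under $\Phi$ to $\Ext^{-1}_{\mathcal A}$-classes, hence vanish. The remaining condition $\Phi^2\Psi^{-1}M = 0$ would follow from the spectral sequence \eqref{eq6}: once $\Psi^0 M = 0$ kills the row $q=0$, the only potentially nonzero $E_\infty^{p,q}$ in total degree $1$ is $E_\infty^{2,-1} = \Phi^2\Psi^{-1}M$, and convergence to zero eliminates it. Hence the only genuine constraint is $\Psi^0 M = 0$, i.e.\ $\Phi\mathcal T[1] = \CC_0$.

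For the torsion-free class, unpacking $\Psi M \in \mathcal U_{2,1}$ gives $\Psi^{-2}M = 0$ and $\Psi^{-1}M \in \BB_0 \cap \XX_1^\circ = \XX_0^\circ \cap \XX_1^\circ$ (the latter via Lemma~\ref{lemma8}). To match this with $\YY_{-1}^\circ \cap \YY_{-2}^\circ$, I would exploit the equivalence $\Phi : \XX_i \to \YY_{-i}[-i]$ recorded in Section~\ref{sec1}. For $N \in \YY_{-i}$, writing $N = \Phi^i E$ with $E = \Psi^{-i}N \in \XX_i$, adjunction yields $\Hom_{\mathcal A}(N,M) = \Hom_D(E,\Psi M[-i])$; a canonical-truncation argument on $\Psi M[-i]$ (using the vanishing of negative $\Ext$-groups in $\mathcal A$) identifies this, whenever $\Psi^{-i-1}M=0$, with $\Hom_{\mathcal Z}(E, \Psi^{-i}M)$. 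Applied at $i=2$ this gives $M\in \YY_{-2}^\circ \iff \Psi^{-2}M = 0$; applied at $i=1$, conditional on $\Psi^{-2}M = 0$ being known, it gives $M\in \YY_{-1}^\circ \iff \Psi^{-1}M\in\XX_1^\circ$. The apparently extra condition $\Psi^{-1}M \in \XX_0^\circ$ is then free: for $E \in \XX_0$ one has $\Phi E = \Phi^0 E$, so adjunction gives $\Hom_D(E, \Psi M[-1]) = \Ext^{-1}_{\mathcal A}(\Phi^0 E, M) = 0$, and this equals $\Hom_{\mathcal Z}(E,\Psi^{-1}M)$ by the same truncation.

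The main obstacle will be the cohomological bookkeeping: checking that the canonical-truncation long exact sequences for $\Psi M[-i]$ really do degenerate into the clean identification $\Hom_D(E,\Psi M[-i]) = \Hom_{\mathcal Z}(E,\Psi^{-i}M)$ under the stated vanishing hypotheses, and being careful with shifts so that the $\Ext^{-k}$'s that must vanish appear in exactly the right positions. Everything else is a direct application of Lemmas~\ref{lemma8}, \ref{lemma9} and the spectral sequence \eqref{eq6}.
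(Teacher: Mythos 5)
Your proposal is correct and follows essentially the same route as the paper: unpack the shifted torsion pair via \eqref{eq2}, use Lemma~\ref{lemma9} and the spectral sequence \eqref{eq6} to see that $M\in\CC_0$ forces $\Psi^{-1}M\in\BB_2$ and $\Psi^{-2}M\in\XX_2\subseteq\BB_0\cap\XX_1^\circ$, and use adjunction plus truncation to handle the torsion-free class. The one genuine (and welcome) organizational difference is in the torsion-free class: for the inclusion $\Phi\mathcal F[1]\subseteq\YY_{-1}^\circ$ the paper factors a test map $G\to M$ through its image and invokes closure of the torsion-free class under subobjects, whereas you extract both inclusions at once from the uniform dictionary $\Hom_{\mathcal A}(\Phi^iE,M)\cong\Hom_{D(\mathcal Z)}(E,\Psi M[-i])\cong\Hom_{\mathcal Z}(E,\Psi^{-i}M)$ (valid once $\Psi^jM=0$ for $j<-i$), together with the observation that $\Psi^{-1}M\in\BB_0=\XX_0^\circ$ is automatic when $\Psi^{-2}M=0$; this is the same adjunction computation the paper uses for the reverse inclusion, just applied symmetrically. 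Two small points to tidy: the vanishing you need in the truncation step is of negative $\Ext$-groups in $\mathcal Z$ (not $\mathcal A$), and in the equivalence $M\in\YY_{-2}^\circ\iff\Psi^{-2}M=0$ the forward direction still needs Lemma~\ref{lemma9} to test against $E=\Psi^{-2}M\in\XX_2$ itself.
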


Putting Proposition \ref{pro2} and Lemmas \ref{lemma10} together, we can  summarise the tilts we have constructed so far in the following diagram, where the left column represents the tilt on the $D(\mathcal Z)$ side, and the right column represents the tilt on the $D(\mathcal A)$ side:

\begin{equation}\label{eq15}
\xymatrix{
\mathcal Z \ar[d]^{(\BB_2, \XX_0^\circ \cap \XX_1^\circ)} & \\
\mathcal U_{2,1} & \Phi (\mathcal U_{2,1})[1] \\
& \mathcal A \ar[u]_{(\CC_0,\YY_{-1}^\circ \cap \YY_{-2}^\circ)}
}
\end{equation}

\begin{remark}\label{remark4}
From diagram \eqref{eq15}, it is as if the tilt on the left is a `mirror image' of the tilt on the right. However, it is not clear whether this phenomenon holds in general.
\end{remark}


\begin{proof}[Proof of Lemma \ref{lemma10}]
From the proof of Proposition \ref{pro2}, we already know that $\Phi (\mathcal U_{2,1})[1]$ can be obtained by tilting $\mathcal A$ at the torsion pair $(\Phi \mathcal T [1], \Phi \mathcal F [1])$.  So what we want to show here are $\Phi \mathcal T [1] = \CC_0$ and $\Phi \mathcal F [1] = \YY_{-1}^\circ \cap \YY_{-2}^\circ$.

To start with, let us show $\Phi \mathcal T [1] = \CC_0$.  Take any $M \in \Phi \mathcal T [1]$.  Then $\Psi M \in \mathcal U_{2,1}[1] \subseteq D^{[-2,-1]}_{\mathcal Z}$, and so $\Psi M \in \CC_0$.  For the other inclusion, take any $M \in \CC_0$.  Then $\Psi^0M =0$, and from the spectral sequence \eqref{eq6}, we see that $\Psi^{-1}M \in \BB_2$.  Also, $\Psi^{-2}M$ lies in $\XX_2$, and so lies in $\XX_0^\circ \cap \XX_1^\circ$.  Overall, $\Psi M \in \mathcal U_{2,1}[1]$ by Lemma \ref{lemma8}, giving us $M \in \Phi (\mathcal U_{2,1})[1] \cap \mathcal A = \Phi \mathcal T [1]$.  Hence $\Phi \mathcal T [1] = \CC_0$.

Next, let us show $\Phi \mathcal F [1] = \YY_{-1}^\circ \cap \YY_{-2}^\circ$.  Take any $M \in \Phi \mathcal F [1]$, any $G \in \YY_{-1}$ and any $A$-linear map $\al : G \to M$.  Let $I := \image (\al)$.  Since $\Phi \mathcal F [1]$ is the torsion-free class in a torsion pair in $\mathcal A$, it is closed under taking subobjects in $\mathcal A$.  Hence $I \in \Phi \mathcal F [1]$, and so $\Psi (I) \in \mathcal U_{2,1} \cap \XX_0^D$.  On the other hand, $\Psi G \in \XX_1[1]$.  Writing $\bar{\al}$ for the surjection $G \to I$ in $\mathcal A$ induced by $\al$, we see that $\Psi \bar{\al} \in \Hom_{\mathcal Z}(\Psi G, \Psi I)$ is  induced by some $\al' \in \Hom_{\mathcal A} (\Psi^{-1}G, \Psi^{-1}I)$ (note: here, we are using $\Psi^i$ to denote the functor $H^i \circ \Psi$).  However, $\Psi^{-1}G \in \XX_1$ while $\Psi^{-1}I \in \XX_1^\circ$, so $\al'$ must be zero, i.e.\ $\al$ must be zero.  This shows that $M \in \YY_{-1}^\circ$.  That $M \in \YY_{-2}^\circ$ is clear ($\Psi M$ has cohomology at degrees $-1$ and 0 only).  Hence $\Phi \mathcal F [1] \subseteq \YY_{-1}^\circ \cap \YY_{-2}^\circ$.

To prove the other inclusion, take any $N \in \YY_{-1}^\circ \cap \YY_{-2}^\circ$.  We want to show that $N \in \Phi \mathcal F [1]$, which is equivalent to  $N \in \mathcal A$ (which is clear) and $\Psi N \in \mathcal U_{2,1}$ by \eqref{eq2}.  If $\Psi^{-2}N \neq 0$, then the canonical morphism $\Psi^{-2}N [2] \to \Psi N$ gives a nonzero morphism $\Phi^2 (\Psi^{-2}N) \to N$ in $\mathcal A$, which is impossible since $\Phi^2 (\Psi^{-2} N) \in \YY_{-2}$.  Hence $\Psi^{-2}N =0$, and we have an exact triangle in $\mathcal Z$
\begin{equation}\label{eq16}
  \Psi^{-1}N [1] \to \Psi N \to \Psi^0 N.
\end{equation}
Now, $\Psi^0 N$ lies in $\XX_0$ by Lemma \ref{lemma9}, and in particular $\Psi^0 N \in \BB_2$.  Therefore, it remains to show that $\Psi^{-1}N \in \BB_0 \cap \XX_1^\circ$ (by Lemma \ref{lemma8}).

Applying $\Phi$ to the exact triangle \eqref{eq16}, we see that $\Phi^0 (\Psi^{-1} N)=0$, and so $\Psi^{-1}N \in \BB_0$.  Finally, to show $\Psi^{-1}N \in \XX_1^\circ$, let us take any $G \in \XX_1$.  Since we have the isomorphism
\[
  \Hom_{D(\mathcal Z)} (G[1],\Psi N) \cong \Hom_{\mathcal Z} (G,\Psi^{-1}N),
\]
any nonzero morphism $\theta : G \to \Psi^{-1}N$ induces a nonzero morphism $\bar{\theta} : G [1] \to \Psi N$, and hence a nonzero morphism $\Phi (\bar{\theta}) : \Phi^1 G \to N$.  However, $\Phi^1 G \in \YY_{-1}$ while $N \in \YY_{-1}^\circ$, so we have a contradiction.  Hence $\Psi^{-1}N$ must lie in $\XX_1^\circ$, and this completes the proof of this lemma.
\end{proof}


\begin{remark}
As in Remark \ref{remark3}, if $X$ is a smooth projective surface, $\mathcal Z = \Coh (X)$ and $T$ a tilting sheaf on $X$, then Proposition \ref{pro2} says that the two hearts $\Coh (X)$ and $\module A^{op}$ are related by a sequence of two tilts.
\end{remark}

\begin{remark}
    When $\mathcal Z$ is the category $\Coh (X)$ of coherent sheaves on a smooth projective variety $X$ over $\mathbb{C}$, there is an infinite number of simple objects in $\Coh (X)$ (e.g.\ the skyscraper sheaves), and so Assumption 1 in Woolf \cite{Woolf} is not satisfied.
\end{remark}

Since $\CC_0$ is closed under extensions and quotients in $\mathcal A$, Lemma \ref{lemma-Pol} tells us that we have a torsion pair $(\CC_0,\CC_0^\circ)$ in $\mathcal A$.  Combining this observation with Lemma \ref{lemma10}, we obtain:

\begin{coro}\label{coro4}
Suppose $\mathcal Z$ is a noetherian abelian category of homological dimension 2 and the spectral condition holds.  Then  $\CC_0^\circ = \YY_{-1}^\circ \cap \YY_{-2}^\circ$.
\end{coro}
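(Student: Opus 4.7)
The plan is to observe that once Lemma \ref{lemma10} is in hand, the corollary essentially falls out by the uniqueness of the torsion-free class associated to a given torsion class.

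First I would note that $\CC_0$ is closed under extensions (this was recorded already in Section \ref{sec1}) and under quotients in $\mathcal A$, since $\Psi^0$ is right exact on $\mathcal A$ in the appropriate sense, or more directly from the spectral sequence \eqref{eq6}: if $M \twoheadrightarrow M'$ and $\Psi^0 M = 0$, then the induced map on $E_2^{0,0}$-terms forces $\Psi^0 M' = 0$. As $\mathcal A$ is noetherian, Lemma \ref{lemma-Pol} then produces the torsion pair $(\CC_0, \CC_0^\circ)$ in $\mathcal A$.

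Next, I invoke Lemma \ref{lemma10}, which tells us that $(\CC_0, \YY_{-1}^\circ \cap \YY_{-2}^\circ)$ is a torsion pair in $\mathcal A$ (it is the torsion pair realising the tilt from $\mathcal A$ to $\Phi(\mathcal U_{2,1})[1]$). So we have two torsion pairs in $\mathcal A$ with the same torsion class $\CC_0$.

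The final step is to use the general fact that a torsion pair is determined by its torsion class: given a torsion pair $(\mathcal T, \mathcal F)$ in an abelian category, the torsion-free class coincides with $\mathcal T^\circ = \{E : \Hom(\mathcal T, E) = 0\}$. Applying this to both torsion pairs yields $\CC_0^\circ = \YY_{-1}^\circ \cap \YY_{-2}^\circ$, as required.

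There is no real obstacle here; the only point that requires a line of verification is the closure of $\CC_0$ under quotients in $\mathcal A$, which is immediate from the spectral sequence \eqref{eq6} applied to a quotient $M \twoheadrightarrow M'$ (any nonzero $\Psi^0 M'$ would, via the edge map, produce a nonzero contribution in $\Psi^0 M$). Everything else is a formal consequence of Lemma \ref{lemma10} combined with Lemma \ref{lemma-Pol}.
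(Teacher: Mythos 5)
Your argument is correct and is exactly the paper's: the paper notes that $\CC_0$ is closed under extensions and quotients in $\mathcal A$ (recorded in Section \ref{sec1}), so Lemma \ref{lemma-Pol} yields the torsion pair $(\CC_0,\CC_0^\circ)$, and comparing its torsion-free class with that of the torsion pair $(\CC_0,\YY_{-1}^\circ\cap\YY_{-2}^\circ)$ from Lemma \ref{lemma10} gives the result. The one quibble is your justification of quotient-closure of $\CC_0$: functoriality of \eqref{eq6} does not by itself force $\Psi^0 M'=0$ from $\Psi^0 M=0$ (a zero source says nothing about the target); the clean argument is the long exact sequence of $\Psi$-cohomology of $0\to K\to M\to M'\to 0$, which gives a surjection $\Psi^0 M\twoheadrightarrow \Psi^0 M'$ because $\Psi^j$ vanishes in positive degrees on $\mathcal A$.
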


Given the results in this section, it is natural to ask:

\begin{question}\label{question1}
Suppose $\mathcal Z$ is  the category of coherent sheaves on a smooth projective variety of dimension $n$ (resp.\  the category of finitely generated modules over a finite-dimensional algebra of homological dimension $n$),  while $T$ is a tilting sheaf (resp.\ a tilting object), and $A$ is the endomorphism algebra of $T$.  Are $\mathcal Z$ and $\module A^{op}$ related by a sequence of $n$ tilts?
\end{question}


\section{Results of Jensen-Madsen-Su}\label{sec3}

In this section, we give simplified  proofs of the two main theorems in \cite{JMS} by using the spectral sequences \eqref{eq1} and \eqref{eq6}.  For convenience, throughout this section, $\mathcal Z$ will be a noetherian abelian category of homological dimension two that is $k$-additive for a field $k$ and Hom-finite, and we will assume that the  spectral condition holds.

As in \cite{JMS}, we define the following full subcategories of $\mathcal Z$:
\begin{align*}
    \KK_0 &:= \{\text{cokernels of injections from objects in $\XX_2$ to objects in $\XX_0$}\}\\
    \KK_2 &:= \{ \text{kernels of surjections from objects in $\XX_2$ to objects in $\XX_0$}\} \\
    \KK_1 &:= \XX_1 \\
    \EE_i &:= \text{extension-closure of $\KK_i$ for $i=0,1,2$}.
\end{align*}
Note the following relations:
\begin{equation*}
  \XX_0 \subseteq \KK_0 \subseteq  \BB_2, \, \XX_2 \subseteq \KK_2 \subseteq \BB_0, \, \XX_1= \KK_1 =\EE_1 .
  \end{equation*}

The two main theorems of Jensen-Madsen-Su in \cite{JMS} are as follows:

\begin{theorem}\cite[Theorem 2]{JMS}\label{theorem1}
Suppose $T$ is a tilting object in $\mathcal Z$, which has homological dimension at most two.  Then for any object $E \in \mathcal Z$, there is a unique and functorial filtration
\begin{equation}\label{eq3}
0 = E_0 \subseteq E_1 \subseteq E_2 \subseteq E_3 = E
\end{equation}
with $E_{i+1}/E_i \in \mathcal E_i$, where the $\mathcal E_i$  are pairwise disjoint and extension-closed subcategories of $\mathcal Z$.
\end{theorem}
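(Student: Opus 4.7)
The plan is to construct the filtration directly from the spectral sequence \eqref{eq1}.  For $E \in \mathcal Z$, the terms $E_2^{p,q} = \Psi^p\Phi^q E$ converge to $E$ in total degree $0$; by Lemma \ref{lemma7} and the bounds $-2 \le p \le 0$, $0 \le q \le 2$, only the positions $(0,0)$, $(-1,1)$, $(-2,2)$, $(-2,1)$ and $(0,1)$ can be nonzero on the $E_2$-page.  The vanishing of the off-diagonal contributions at $E_\infty$ forces $d_2^{-2,1}\colon \Psi^{-2}\Phi^1 E \to \Psi^0\Phi^0 E$ to be injective and $d_2^{-2,2}\colon \Psi^{-2}\Phi^2 E \to \Psi^0\Phi^1 E$ to be surjective, and all later differentials vanish for range reasons.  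The resulting filtration $0 = E_0 \subseteq E_1 \subseteq E_2 \subseteq E_3 = E$ has graded pieces $E_\infty^{0,0}$, $E_\infty^{-1,1}$, $E_\infty^{-2,2}$; by Lemma \ref{lemma9}, the source and target of $d_2^{-2,1}$ lie in $\XX_2$ and $\XX_0$ respectively, so $E_1 = \Psi^0\Phi^0 E / \image d_2^{-2,1}$ is a cokernel of an injection from $\XX_2$ into $\XX_0$ and hence $E_1 \in \KK_0 \subseteq \EE_0$; dually $E_3/E_2 = \kernel d_2^{-2,2} \in \KK_2 \subseteq \EE_2$.

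The main obstacle is showing the middle factor $E_2/E_1 = \Psi^{-1}\Phi^1 E$ lies in $\XX_1 = \EE_1$.  Setting $E'' := \Psi^{-1}\Phi^1 E \in \mathcal Z$, Lemma \ref{lemma7} already gives $\Phi^0 E'' \in \YY_0$ and $\Phi^2 E'' \in \YY_{-2}$, so it suffices to show each of these vanishes.  To that end we apply the second spectral sequence \eqref{eq6} to $N := \Phi^1 E \in \mathcal A$: the analogous ``forced differential'' analysis yields an injection $\Phi^0 E'' \hookrightarrow \Phi^2\Psi^{-2}N$ (with target in $\YY_{-2}$) and a surjection $\Phi^0\Psi^0 N \twoheadrightarrow \Phi^2 E''$ (with source in $\YY_0$).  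The plan is then to establish an ``orthogonality'' lemma: in $\mathcal A$ there are no nonzero monomorphisms from $\YY_0$ into $\YY_{-2}$, and dually no nonzero epimorphisms from $\YY_0$ onto $\YY_{-2}$.  The monomorphism half comes cleanly from applying $\Psi$ to a short exact sequence $0 \to P \to Q \to S \to 0$ with $P \in \YY_0$, $Q \in \YY_{-2}$, and reading off from the long exact sequence in $D(\mathcal Z)$ that $\Psi^0 P = 0$, hence $P = 0$; applied to our setting this gives $\Phi^0 E'' = 0$.  The epimorphism half is the trickier direction and is where the bulk of the technical work lies: a similar $\Psi$-plus-long-exact-sequence argument, combined with the fact that any such kernel object must itself lie in $\mathcal A$ (i.e.\ have $\Psi$-cohomology compatible with the heart $\HH$), should force the target to be zero and hence $\Phi^2 E'' = 0$.

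Uniqueness of the filtration then follows from pairwise disjointness $\Hom_{\mathcal Z}(\EE_i,\EE_j) = 0$ for $i < j$, checkable from the explicit description of $\Phi^\bullet$ on objects of each $\KK_i$ derived above; functoriality is immediate from functoriality of the spectral sequence \eqref{eq1} in $E$.
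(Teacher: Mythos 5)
Your first paragraph is correct and reproduces the paper's Lemma \ref{lemma11}: the spectral sequence \eqref{eq1} does give a three-step filtration with $E_\infty^{0,0}=\cokernel(d_2^{-2,1})\in\KK_0$ and $E_\infty^{-2,2}=\kernel(d_2^{-2,2})\in\KK_2$. The gap is in the second paragraph: the middle graded piece $E_2^{-1,1}=\Psi^{-1}\Phi^1E$ does \emph{not} lie in $\XX_1$ in general, and this is precisely the difficulty the paper (following \cite{JMS}) is organised around. The spectral sequence \eqref{eq6} applied to $N=\Phi^1E$ only forces $d_2$ to give an injection $\Phi^0(\Psi^{-1}N)\hookrightarrow\Phi^2(\Psi^{-2}N)$ and a surjection $\Phi^0(\Psi^0N)\twoheadrightarrow\Phi^2(\Psi^{-1}N)$; neither forces vanishing. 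Your proposed ``orthogonality lemma'' fails already in the half you call clean: for a short exact sequence $0\to P\to Q\to S\to 0$ in $\mathcal A$ with $P\in\YY_0$ and $Q\in\YY_{-2}$, the long exact sequence of $\Psi$-cohomology reads $0=\Psi^{-1}Q\to\Psi^{-1}S\to\Psi^0P\to\Psi^0Q=0$, which gives $\Psi^0P\cong\Psi^{-1}S$, not $\Psi^0P=0$ (note $\Hom_{\mathcal A}(\YY_0,\YY_{-2})\cong\Ext^2_{\mathcal Z}(\XX_0,\XX_2)$, which has no reason to vanish). If your claim held, every object would have a three-step filtration with factors in $\KK_0,\XX_1,\KK_2$, making Lemma \ref{lemma12} and the definition of $\EE_0,\EE_2$ as extension-closures (rather than just $\KK_0,\KK_2$) pointless.

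The paper's route around this is Lemma \ref{lemma12}: $d(E_2^{-1,1})\leq d(E)$ where $d(-)=\dim_k\Phi^1(-)$, with equality only if $E_2^{-1,1}\in\XX_1$. One therefore \emph{iterates} Lemma \ref{lemma11} on the successive middle pieces $Y_i/Z_i$; the dimension count guarantees termination, producing the longer filtration of Theorem \ref{theorem2} with several $\KK_0$- and $\KK_2$-factors, which are then packaged into the extension-closures $\EE_0$ and $\EE_2$ to get Theorem \ref{theorem1}. You would need to add this iteration (or an equivalent termination argument) to repair the proof. Your treatment of uniqueness also needs the orthogonality $\Hom_{\mathcal Z}(\EE_i,\EE_j)=0$ for $i<j$, which the paper obtains by identifying the $\EE_i$ with intersections of torsion and torsion-free classes (Theorem \ref{theorem3} and Corollary \ref{coro5}); this does not follow from the single-pass spectral sequence computation alone.
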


The filtration \eqref{eq3} can be refined as shown below:

\begin{theorem}\cite[Theorem 4]{JMS} \label{theorem2}
Suppose $T$ is a tilting object in $\mathcal Z$, which has homological dimension at most two.  Then for any object $E \in \mathcal Z$, there is a filtration
\begin{equation}\label{eq8}
0 = Z_0 \subseteq \cdots \subseteq Z_n \subseteq Y_n \subseteq \cdots \subseteq Y_0 = E
\end{equation}
with all $Z_{i+1}/Z_i \in \KK_0$, $Y_n/Z_n \in \KK_1$ and all $Y_i/Y_{i+1} \in\KK_2$, for some $n$.
\end{theorem}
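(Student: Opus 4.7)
The plan is to use Theorem \ref{theorem1} as a black box and then to refine each of its three pieces by unfolding the definition of $\EE_i$ as the extension-closure of $\KK_i$. First, applying Theorem \ref{theorem1} to $E$ produces a filtration
\[
  0 \subseteq E_1 \subseteq E_2 \subseteq E_3 = E
\]
with $E_1 \in \EE_0$, $E_2/E_1 \in \EE_1$, and $E/E_2 \in \EE_2$. Since $\EE_1 = \KK_1 = \XX_1$ by definition, the middle piece $E_2/E_1$ is already of the desired form and will play the role of the single $\KK_1$-factor $Y_n/Z_n$ in \eqref{eq8}.

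Next, I would unfold the two outer pieces. By the very definition of extension-closure, any object of $\EE_0$ admits a finite ascending filtration whose successive quotients lie in $\KK_0$; applying this to $E_1$ yields
\[
  0 = Z_0 \subseteq Z_1 \subseteq \cdots \subseteq Z_n = E_1, \quad Z_{i+1}/Z_i \in \KK_0.
\]
Similarly, $E/E_2 \in \EE_2$ admits a finite ascending filtration $0 = F_0 \subseteq \cdots \subseteq F_m = E/E_2$ with $F_{j+1}/F_j \in \KK_2$. Pulling this back through the quotient map $q : E \twoheadrightarrow E/E_2$ by setting $Y_j := q^{-1}(F_{m-j})$ produces the descending chain
\[
  E = Y_0 \supseteq Y_1 \supseteq \cdots \supseteq Y_m = E_2, \quad Y_j/Y_{j+1} \cong F_{m-j}/F_{m-j-1} \in \KK_2.
\]

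Finally, I would splice these two chains at the middle piece: identify $Y_m$ with $E_2$ and $Z_n$ with $E_1$, so that the middle factor $E_2/E_1$ lies in $\KK_1$, and pad whichever side is shorter with trivial repetitions (harmless since $0 \in \KK_0 \cap \KK_2$) to align both lengths to a common value, which we call $n$. This produces the filtration \eqref{eq8}. The main obstacle in this argument lies entirely inside Theorem \ref{theorem1}; granted that theorem, the refinement is essentially tautological and uses only the elementary fact that objects in the extension-closure of a class admit finite filtrations with factors in that class. In particular, no further appeal to the spectral sequences \eqref{eq1}, \eqref{eq6}, or to Hom-finiteness of $\mathcal Z$ is needed beyond what has already been used in Theorem \ref{theorem1}.
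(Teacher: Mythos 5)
Your refinement step itself---unfolding $E_1 \in \EE_0$ and $E/E_2 \in \EE_2$ into finite chains with factors in $\KK_0$ and $\KK_2$, splicing at the $\KK_1$-piece, and padding with trivial repetitions---is unobjectionable; that really is just the definition of extension-closure. The problem is that the argument is circular in the context of this paper: Theorem \ref{theorem1} is not available as a black box at this point. The logical order here is the reverse of the one you assume. The paper derives the existence part of Theorem \ref{theorem1} \emph{from} Theorem \ref{theorem2} (see Remark \ref{remark1} and the proof of Theorem \ref{theorem1}), and the remaining ingredients of Theorem \ref{theorem1} are no help either: its uniqueness is Lemma \ref{lemma16}, which goes through Corollary \ref{coro5} and Lemma \ref{lemma15}, and the proof of Lemma \ref{lemma15} itself invokes Theorem \ref{theorem2}. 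So taking Theorem \ref{theorem1} for granted assumes precisely the content that Theorem \ref{theorem2} is supposed to establish. The only way to make your plan non-circular would be to import Jensen--Madsen--Su's original proof of Theorem \ref{theorem1}, which defeats the stated purpose of Section \ref{sec3} (a self-contained spectral-sequence proof of both theorems).

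The idea your proposal is missing is the direct construction. The paper uses the spectral sequence \eqref{eq1} to produce, for each $E$, a three-step filtration $F^0E \subseteq F^{-1}E \subseteq E$ with $F^0E \in \KK_0$, $E/F^{-1}E \in \KK_2$, and middle factor $F^{-1}E/F^0E \cong E^{-1,1}_2 = \Psi^{-1}(\Phi^1 E)$ (Lemma \ref{lemma11}), and then shows that the invariant $d(-) = \dim_k \Phi^1(-)$ strictly decreases when this construction is applied again to the middle factor, unless that factor already lies in $\XX_1$ (Lemma \ref{lemma12}). Iterating Lemma \ref{lemma11} on the successive middle factors and using Hom-finiteness to guarantee that the descending chain of dimensions terminates produces \eqref{eq8} with no reference to Theorem \ref{theorem1} at all. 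In particular, your closing claim that no appeal to the spectral sequences or to Hom-finiteness is needed ``beyond what has already been used in Theorem \ref{theorem1}'' is vacuous here, since in this paper the proof of Theorem \ref{theorem1} \emph{is} (the existence part of) the proof of Theorem \ref{theorem2}.
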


Note that, Theorem \ref{theorem2} implies the existence of the filtration in Theorem \ref{theorem1} simply from the way we define the categories $\EE_i$.


\begin{lemma}\cite[Lemma 12]{JMS}\label{lemma11}
For any object $E \in \mathcal Z$, we have a filtration in $\mathcal Z$
\begin{equation}\label{eq10}
  F^0 E \subseteq F^{-1}E \subseteq F^{-2}E = E
\end{equation}
where $F^0E \in \KK_0, F^{-1}E/F^0E \cong E^{-1,1}_2$ and $F^{-2}E/F^{-1}E \in \KK_2$.
\end{lemma}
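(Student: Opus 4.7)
The plan is to read the desired filtration off directly from the convergent spectral sequence \eqref{eq1} applied to $E$. First I would pin down the support of the $E_2$-page: by Lemma \ref{lemma7}, combined with the facts that $\Phi^0 E \in \YY_0$ (so $\Psi^p(\Phi^0 E)=0$ for $p \neq 0$) and $\Phi^2 E \in \YY_{-2}$ (so $\Psi^p(\Phi^2 E)=0$ for $p \neq -2$), the only possibly nonzero $E_2^{p,q}$ lie in the box $-2 \leq p \leq 0$, $0 \leq q \leq 2$, and within that box the additional terms $E_2^{-2,0}, E_2^{-1,0}, E_2^{-1,2}, E_2^{0,2}$ also vanish. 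The only differentials between surviving positions are $d_2^{-2,1} : E_2^{-2,1} \to E_2^{0,0}$ and $d_2^{-2,2} : E_2^{-2,2} \to E_2^{0,1}$; every $d_r$ with $r \geq 3$ has vanishing source or target, so the spectral sequence degenerates at the $E_3$-page.

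Since the antidiagonal $p+q=0$ meets the remaining support only at $(0,0), (-1,1), (-2,2)$, the convergence of \eqref{eq1} to $E$ in total degree zero yields a three-step filtration
\[
0 \subseteq F^0 E \subseteq F^{-1} E \subseteq F^{-2} E = E
\]
in $\mathcal Z$ with successive quotients $E_\infty^{0,0}, E_\infty^{-1,1}, E_\infty^{-2,2}$ from bottom to top. No differential enters or leaves $E_2^{-1,1}$, so $E_\infty^{-1,1} = E_2^{-1,1}$, which is exactly the claimed identification $F^{-1}E / F^0 E \cong E_2^{-1,1}$.

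It remains to identify the two end pieces. Convergence of \eqref{eq1} forces $E_\infty^{-2,1}=0$ and $E_\infty^{0,1}=0$, so $d_2^{-2,1}$ must be injective and $d_2^{-2,2}$ must be surjective; hence
\[
F^0 E \cong \cokernel (d_2^{-2,1}), \qquad F^{-2}E / F^{-1} E \cong \kernel (d_2^{-2,2}).
\]
By Lemma \ref{lemma9}, the sources $\Psi^{-2}(\Phi^1 E)$ and $\Psi^{-2}(\Phi^2 E)$ of these differentials lie in $\XX_2$, while the targets $\Psi^0(\Phi^0 E)$ and $\Psi^0(\Phi^1 E)$ lie in $\XX_0$; the definitions of $\KK_0$ and $\KK_2$ then give $F^0 E \in \KK_0$ and $F^{-2}E/F^{-1}E \in \KK_2$. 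The step where I would exercise most care is the bookkeeping of spectral-sequence conventions: one must verify that the decreasing filtration produced by \eqref{eq1} descends to a filtration by honest subobjects of $E$ inside $\mathcal Z$, and that the $p$-indexing aligns with the statement so that $F^0 E$ is the bottom piece (landing in $\KK_0$) rather than the top. Once that is fixed, the $\KK_0$ and $\KK_2$ memberships are forced purely by the positions of the two surviving $d_2$ arrows.
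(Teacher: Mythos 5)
Your argument is correct and is essentially the paper's own proof: the paper likewise reads the three-step filtration off the spectral sequence \eqref{eq1}, extracts the two short exact sequences $0 \to E^{-2,1}_2 \to E^{0,0}_2 \to E^{0,0}_\infty \to 0$ and $0 \to E^{-2,2}_\infty \to E^{-2,2}_2 \to E^{0,1}_2 \to 0$ (your injectivity of $d_2^{-2,1}$ and surjectivity of $d_2^{-2,2}$), and concludes via Lemma \ref{lemma9} that the end terms lie in $\KK_0$ and $\KK_2$. Your version just makes the degeneration at $E_3$ and the convergence bookkeeping more explicit.
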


\begin{proof}
From the spectral sequence  \eqref{eq1}, we have a filtration
\begin{gather*}
  F^0E \subseteq F^{-1}E \subseteq F^{-2}E = E \text{ where } \\
   F^0E \cong E^{0,0}_\infty, \, F^{-1}E/F^0E \cong E^{-1,1}_2 \cong E^{-1,1}_\infty, \, F^{-2}E/F^{-1}E \cong E^{-2,2}_\infty.
\end{gather*}
From the $E_2$ page of the spectral sequence, we obtain  short exact sequences in the abelian category $\mathcal Z$
\begin{align}
  0 \to E^{-2,1}_2 \to E^{0,0}_2 \to E^{0,0}_\infty \to 0, \label{eq4} \\
    0 \to E^{-2,2}_\infty \to E^{-2,2}_2 \to E^{0,1}_2 \to 0 \label{eq5}.
\end{align}
By Lemma \ref{lemma9}, the short exact sequence \eqref{eq4} immediately gives us $E^{0,0}_\infty \in \KK_0$, while the short exact sequence \eqref{eq5} immediately gives   $E^{-2,2}_\infty \in \KK_2$.
\end{proof}


For any object $E \in \mathcal Z$, let $d(E)$ denote the dimension of $\Phi^1 E$ as a vector space over $k$.  Lemma \ref{lemma11}, together with the following lemma, will give us Theorem \ref{theorem2}:

\begin{lemma}\cite[Lemma 13]{JMS}\label{lemma12}
For any  $E \in \mathcal Z$, we have $d(E^{-1,1}_2) \leq d(E)$, and equality holds only if $E^{-1,1}_2 \in \XX_1$.
\end{lemma}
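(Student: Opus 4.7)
The plan is to apply the spectral sequence \eqref{eq6} to $M := \Phi^1 E \in \mathcal A$, which reads $E_2^{p,q} = \Phi^p(\Psi^q M) \Rightarrow M$ in total degree $0$. Since $E^{-1,1}_2 = \Psi^{-1}(\Phi^1 E) = \Psi^{-1} M$, we have $d(E^{-1,1}_2) = \dim_k \Phi^1(\Psi^{-1} M)$, and the strategy is to recognize $\Phi^1(\Psi^{-1} M)$ as one of the graded pieces of a short filtration on $M$.

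First, I would pin down the non-vanishing range of the $E_2$ page: by Lemma \ref{lemma9} and the assumption that $\mathcal Z$ has homological dimension $2$, the only possibly nonzero positions are $0 \leq p \leq 2$ and $-2 \leq q \leq 0$. Consequently $d_r$ vanishes for $r \geq 3$, no $d_2$ touches the position $(1,-1)$, and $E_\infty^{1,-1} = E_2^{1,-1} = \Phi^1(\Psi^{-1} M)$. The convergence of the spectral sequence to $M$ in total degree $0$ then produces a three-step filtration of $M$ whose graded pieces are $E_\infty^{0,0}$, $\Phi^1(\Psi^{-1} M)$, and $E_\infty^{2,-2}$. Since the category is Hom-finite over $k$, additivity of $\dim_k$ on short exact sequences immediately gives $d(E^{-1,1}_2) \leq d(E)$.

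For the equality statement, I would first extract the shape of the relevant differentials. The vanishing of $E_\infty^{p,q}$ on the off-diagonals $p+q = \pm 1$ (using that $\Psi^0 M \in \XX_0$ and $\Psi^{-2} M \in \XX_2$ by Lemma \ref{lemma9}, so that $\Phi^1(\Psi^0 M) = \Phi^1(\Psi^{-2} M) = 0$) already forces
\[
  d_2^{0,0}: \Phi^0(\Psi^0 M) \to \Phi^2(\Psi^{-1} M) \quad \text{and} \quad d_2^{0,-1}: \Phi^0(\Psi^{-1} M) \to \Phi^2(\Psi^{-2} M)
\]
to be surjective and injective respectively. Equality in the dimension bound further forces $E_\infty^{0,0} = 0$ and $E_\infty^{2,-2} = 0$, upgrading these two $d_2$'s to isomorphisms.

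The hard step is then to conclude from these isomorphisms that $\Psi^{-1} M \in \XX_1$, i.e., that $\Phi^0(\Psi^{-1} M)$ and $\Phi^2(\Psi^{-1} M)$ both vanish. For this I would apply Lemma \ref{lemma7}: the domains of both isomorphisms lie in $\YY_0$, while the codomains lie in $\YY_{-2}$, and $\YY_0 \cap \YY_{-2} = 0$ because any $N$ in the intersection satisfies $\Psi^j N = 0$ for every $j$ and hence $N = 0$. Both isomorphic groups therefore vanish, yielding $E^{-1,1}_2 \in \XX_1$, as required.
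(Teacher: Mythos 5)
Your argument is correct, and it is worth noting that the paper itself gives no proof of this lemma: it is simply cited as \cite[Lemma 13]{JMS}, whose original argument does not use spectral sequences. Your proof therefore cannot be compared line-by-line with one in the paper, but it does exactly what the author advertises for the rest of Section \ref{sec3}, namely replacing the arguments of \cite{JMS} by spectral-sequence ones. The key steps all check out: applying \eqref{eq6} to $M=\Phi^1E$, the $E_2$ page is concentrated in $0\leq p\leq 2$, $-2\leq q\leq 0$, so $E_\infty^{1,-1}=E_2^{1,-1}=\Phi^1(\Psi^{-1}M)$ and additivity of $\dim_k$ along the three-step filtration of $M$ gives the inequality; convergence to $0$ at $(0,-1)$ and $(2,-1)$ gives the injectivity of $d_2^{0,-1}$ and surjectivity of $d_2^{0,0}$; equality kills $E_\infty^{0,0}$ and $E_\infty^{2,-2}$, upgrading both to isomorphisms; and an isomorphism between an object of $\YY_0$ and an object of $\YY_{-2}$ forces both to vanish since $\YY_0\cap\YY_{-2}=0$. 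Two small points of hygiene: the vanishing $\Psi^qM=0$ for $q\notin[-2,0]$ is not quite a consequence of Lemma \ref{lemma9} as you state, but of \cite[Lemma 5]{JMS}, which is exactly what the paper invokes in the proof of Lemma \ref{lemma7} for the analogous bound; and your parenthetical about $\Phi^1(\Psi^0M)=\Phi^1(\Psi^{-2}M)=0$ concerns the positions $(1,0)$ and $(1,-2)$, which are not the ones driving the surjectivity and injectivity claims (those come from convergence at $(2,-1)$ and $(0,-1)$ respectively). Neither affects the validity of the proof.
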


%
%
%


\begin{proof}[Proof of Theorem \ref{theorem2}]
Take any $E \in \mathcal Z$.  Let $Z_0=0$ and $Y_0 = E$.  By Lemma \ref{lemma11}, we have a filtration of $E$
\begin{equation}\label{eq9}
0 = Z_0 \subseteq Z_1 \subseteq Y_1 \subseteq Y_0 = E
\end{equation}
where $Z_1 := F^0E$ and $Y_1 := F^{-1}E$ as in \eqref{eq10}; Lemma \ref{lemma11} also tells us that $Z_1/Z_0 \in \KK_0, Y_1/Z_1 \cong E^{-1,1}_2$ and $Y_0/Y_1 \in \KK_2$.

We can now repeatedly apply Lemma \ref{lemma11} to $Y_i/Z_i$ for $i \geq 0$ to refine the filtration \eqref{eq9}.  By Lemma \ref{lemma12}, there exists an $n$ such that $Y_n/Z_n \in \XX_1$, at which point we have constructed the desired filtration \eqref{eq8}.
\end{proof}

\begin{remark}\label{remark1}
As noted earlier, Theorem \ref{theorem2} implies the existence of the filtration in Theorem \ref{theorem1}.  In fact, we also have functoriality for the filtration \eqref{eq8} in Theorem \ref{theorem2}: since the filtration \eqref{eq8} comes from the spectral sequences \eqref{eq1} and \eqref{eq6}, the functoriality of \eqref{eq8} follows from that of the Cartan-Eilenberg resolution (see the proof of \cite[Proposition 2.66]{FMTAG}).
\end{remark}


Recall from Corollary \ref{coro1} that we have a torsion pair $(\BB_2, \BB_2^\circ)$ in $\mathcal Z$, and so for  any $E \in \mathcal Z$, we have a filtration of the form
\begin{equation}\label{eq11}
 0 \to E_T \to E \to E_F \to 0
\end{equation}
where $E_T \in \BB_2$ and $E_F \in \BB_2^\circ$.  From torsion theory, we know that such a filtration is unique.  The following lemma says that the filtration constructed by Jensen-Madsen-Su can be obtained as a refinement of the filtration \eqref{eq11}:

\begin{lemma}\label{lemma14}
For any $E \in \mathcal Z$, we can refine the filtration \eqref{eq11} of $E$ to obtain a filtration of the form \eqref{eq3}, in the following sense:
 \begin{itemize}
 \item the term $E_F$ in \eqref{eq11} lies in $\EE_2$;
 \item the term $E_T$ in \eqref{eq11} is an extension
 \[
  0 \to E_{T,1} \to E_T \to E_{T,2} \to 0 \text{ in $\mathcal Z$}
 \]
 where $E_{T,1} \in \EE_0, E_{T,2} \in \EE_1$.
 \end{itemize}
\end{lemma}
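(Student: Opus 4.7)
The plan is to apply Theorem \ref{theorem1} to $E$ and recognise its three-step filtration as a refinement of \eqref{eq11}. Given $E \in \mathcal Z$, let
\[
0 = E_0 \subseteq E_1 \subseteq E_2 \subseteq E_3 = E
\]
be the filtration produced by Theorem \ref{theorem1}, with $E_{i+1}/E_i \in \EE_i$. I claim this refines \eqref{eq11} via the identifications $E_T = E_2$ and $E_F = E/E_2$; once established, setting $E_{T,1} := E_1$ and $E_{T,2} := E_T/E_{T,1}$ delivers the conclusion. By uniqueness of the torsion decomposition for the torsion pair $(\BB_2, \BB_2^\circ)$ from Corollary \ref{coro1}, it suffices to verify $E_2 \in \BB_2$ and $E/E_2 \in \BB_2^\circ$.

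For $E_2 \in \BB_2$: by the containments recorded just after the definitions in Section \ref{sec1}, $\KK_0 \subseteq \BB_2$ and $\KK_1 = \XX_1 \subseteq \BB_2$; since $\BB_2$ is closed under extensions, so are $\EE_0$ and $\EE_1$. Hence $E_2$, being an extension of $E_1 \in \EE_0$ by $E_2/E_1 \in \EE_1$, lies in $\BB_2$.

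The main step is $E/E_2 \in \EE_2 \subseteq \BB_2^\circ$. Using Lemma \ref{lemma6} to write $\BB_2^\circ = \BB_0 \cap \XX_1^\circ$, together with the already-recorded $\KK_2 \subseteq \BB_0$, this reduces to $\KK_2 \subseteq \XX_1^\circ$. I would prove this as follows: take $K \in \KK_2$ presented as $0 \to K \to X_2 \to X_0 \to 0$ with $X_2 \in \XX_2$, $X_0 \in \XX_0$, and pick $Y \in \XX_1$ together with a morphism $\phi : Y \to K$. Composing with $K \hookrightarrow X_2$ and applying the derived equivalence $\Phi$ produces an element of $\Hom_{D(\mathcal A)}(\Phi Y, \Phi X_2)$; since $\Phi Y$ sits in cohomological degree $1$ and $\Phi X_2$ in degree $2$, this Hom group collapses to $\Ext^{-1}_{\mathcal A}(\Phi^1 Y, \Phi^2 X_2) = 0$. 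Hence $Y \to X_2$ vanishes, and monicity of $K \hookrightarrow X_2$ forces $\phi = 0$. Extension closure then propagates the containment to all of $\EE_2$, because torsion-free classes are closed under extensions. This single derived-equivalence vanishing is the only non-formal input in the argument, so I do not anticipate any further obstacle.
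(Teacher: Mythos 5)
Your proof is correct, but it is organised differently from the paper's. The paper first establishes Lemma \ref{lemma15}, the full equality $\EE_2 = \BB_2^\circ$, and then argues in the opposite direction: it starts from the torsion decomposition \eqref{eq11}, concludes $E_F \in \EE_2$ from the (harder) inclusion $\BB_2^\circ \subseteq \EE_2$, and then applies Theorem \ref{theorem2} to $E_T$, the $\EE_2$-factor of $E_T$ dying because $\Hom(\BB_2, \EE_2)=0$. You instead start from the three-step filtration of $E$ itself and identify $E_2$ with $E_T$ via the uniqueness of the torsion decomposition; for this you only need the easy inclusions $\EE_0, \EE_1 \subseteq \BB_2$ together with the single inclusion $\EE_2 \subseteq \BB_2^\circ$, which you obtain from $\KK_2 \subseteq \BB_0 \cap \XX_1^\circ$ and Lemma \ref{lemma6}. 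Your degree computation $\Hom_{D(\mathcal A)}(\Phi Y, \Phi X_2) = \Ext^{-1}_{\mathcal A}(\Phi^1 Y, \Phi^2 X_2) = 0$ is the same kind of vanishing the paper uses inside Lemma \ref{lemma15}, just tested against $\XX_1$ rather than all of $\BB_2$. The trade-off: your route avoids the nontrivial direction $\BB_2^\circ \subseteq \EE_2$ of Lemma \ref{lemma15} entirely, at the cost of invoking the existence of the filtration \eqref{eq3} for $E$ rather than only for $E_T$ --- which is harmless, since existence is already available at this point from Theorem \ref{theorem2} and Remark \ref{remark1}, and you use no uniqueness of \eqref{eq3}, only the standard uniqueness of a torsion decomposition. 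One cosmetic suggestion: cite Theorem \ref{theorem2} together with Remark \ref{remark1} rather than Theorem \ref{theorem1} itself, since the proof of the latter is only completed after this lemma.
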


We single out a step in the proof of Lemma \ref{lemma14}:
\begin{lemma}\label{lemma15}
We have $\EE_2 = \BB_2^\circ$.
\end{lemma}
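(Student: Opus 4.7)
The plan is to prove the two inclusions $\EE_2 \subseteq \BB_2^\circ$ and $\BB_2^\circ \subseteq \EE_2$ separately. Throughout, I would use the identification $\BB_2^\circ = \BB_0 \cap \XX_1^\circ$ from Lemma \ref{lemma8}, the torsion pair $(\BB_2, \BB_2^\circ)$ from Corollary \ref{coro1}, and the refined filtration from Theorem \ref{theorem2}, all of which are available at this point in the paper.

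For $\EE_2 \subseteq \BB_2^\circ$: since $\BB_2^\circ$ is a torsion-free class and hence closed under extensions, it suffices to show $\KK_2 \subseteq \BB_2^\circ$. Given $K \in \KK_2$, fix a short exact sequence $0 \to K \to X_2 \to X_0 \to 0$ with $X_2 \in \XX_2$, $X_0 \in \XX_0$. The containment $K \in \BB_0$ is immediate from $\XX_2 \subseteq \BB_0$ and the closure of $\BB_0$ under subobjects. To see $K \in \XX_1^\circ$, note that any morphism $G \to K$ with $G \in \XX_1$ composes with the injection $K \hookrightarrow X_2$ to give a morphism $G \to X_2$, and passing through $\Phi$ yields
\[
  \Hom_{\mathcal Z}(G, X_2) \cong \Hom_{D(\mathcal A)}(\Phi^1 G [-1], \Phi^2 X_2 [-2]) = \Ext^{-1}_{\mathcal A}(\Phi^1 G, \Phi^2 X_2) = 0,
\]
so the composite is zero, and injectivity of $K \hookrightarrow X_2$ forces the original map $G \to K$ to vanish.

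For $\BB_2^\circ \subseteq \EE_2$: given $E \in \BB_2^\circ$, apply Theorem \ref{theorem2} to obtain a filtration $0 = Z_0 \subseteq \cdots \subseteq Z_n \subseteq Y_n \subseteq \cdots \subseteq Y_0 = E$ whose factors lie in $\KK_0$, $\KK_1$, $\KK_2$ respectively. The inclusions $\KK_0 \subseteq \BB_2$ and $\KK_1 = \XX_1 \subseteq \BB_2$, combined with the fact that $\BB_2$ is extension-closed (being a torsion class), show that $Y_n$ lies in $\BB_2$. The inclusion $Y_n \hookrightarrow E$ is then a morphism from an object of $\BB_2$ to an object of $\BB_2^\circ$, hence zero, forcing $Y_n = 0$. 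Consequently $E = Y_0$ is an iterated extension of the $\KK_2$-factors $Y_i/Y_{i+1}$, placing $E$ in $\EE_2$.

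The argument is essentially routine once the right identifications are in hand; the only step that requires any genuine computation is the vanishing of $\Hom_{\mathcal Z}(\XX_1, \XX_2)$, and this follows directly from $\Phi$ being a derived equivalence together with the concentration of $\Phi$-cohomology of $\XX_1$- and $\XX_2$-objects in adjacent degrees.
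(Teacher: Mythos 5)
Your proof is correct and follows essentially the same strategy as the paper: both directions reduce the first inclusion to $\KK_2 \subseteq \BB_2^\circ$ via a Hom-vanishing computed through $\Phi$ (the paper tests against $\BB_2$ directly, you route through Lemma \ref{lemma6}'s identification $\BB_2^\circ = \BB_0 \cap \XX_1^\circ$, a cosmetic difference), and both derive the reverse inclusion from the Theorem \ref{theorem2} filtration by showing the subobject built from $\KK_0$- and $\KK_1$-factors maps to zero in $E$ and hence vanishes.
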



\begin{proof}
Take any nonzero $E \in \KK_2$.  By definition, $E$ is the kernel of some surjection $\al : G \to B$ in $\mathcal Z$ where $G \in \XX_2, B \in \XX_0$.  If there is a nonzero morphism $\beta : C \to E$ where $C \in \BB_2$, then the induced map $C \to G$ would be a nonzero morphism.  However, $\Hom_{D(\mathcal A)}(\Phi C, \Phi G)=0$, so we have a contradiction. This shows that $\KK_2 \subseteq \BB_2^\circ$, and so $\EE_2 \subseteq \BB_2^\circ$.

To show the other inclusion, i.e.\ $\BB_2^\circ \subseteq \EE_2$, let us take any $E \in \BB_2^\circ$.  Clearly, $\Hom (\XX_1, E)=0$.  We claim that we also have $\Hom (\KK_0, E)=0$: for any nonzero $G \in \KK_0$, we have a surjection $G' \twoheadrightarrow G$ in $\mathcal Z$ where $G' \in \XX_0 \subseteq \BB_2$.  Since any composite morphism $G' \twoheadrightarrow G \to E$ must be zero, we see that $\Hom (G,E)=0$.  Hence $\Hom (\KK_0, E)=0$, and by Theorem \ref{theorem2}, $E$ itself must lie in $\EE_2$.
\end{proof}

\begin{proof}[Proof of Lemma \ref{lemma14}]
Consider the filtration \eqref{eq11} of $E$.  By Lemma \ref{lemma15}, we have $E_F \in \EE_2$.  Since $E_T \in \BB_2$ and $\Hom (\BB_2, \EE_2)=0$ (also by Lemma \ref{lemma15}), Theorem \ref{theorem2} tells us that $E_T$ has a filtration in $\mathcal Z$ of the form
\[
  0 \subseteq E_{T,1} \subseteq E_T \text{ in $\mathcal Z$}
\]
where $E_{T,1} \in \EE_0$ and $E_T/E_{T,1} \in \EE_1$.
\end{proof}

In Section \ref{sec4}, we give a generalisation of the filtration \eqref{eq3} in Theorem \ref{theorem1}; the following lemma will follow immediately from Theorem \ref{theorem3} and Corollary \ref{coro5} (since every torsion pair gives a unique two-step filtration):

\begin{lemma}\label{lemma16}
The filtration \eqref{eq3} is unique.
\end{lemma}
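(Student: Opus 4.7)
The plan is to present filtration \eqref{eq3} as the concatenation of two torsion-pair decompositions, and then invoke the standard uniqueness of such decompositions. The author's own hint points this way: Theorem \ref{theorem3} constructs a filtration in the general finite-homological-dimension setting by iterating torsion pairs, and Corollary \ref{coro5} identifies its output with \eqref{eq3} when $T$ has homological dimension two, so uniqueness transfers with no extra work.

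For a more self-contained argument in the present setting, the first torsion pair to use is $(\BB_2,\BB_2^\circ) = (\BB_2, \EE_2)$, which is available by Corollary \ref{coro1} together with Lemma \ref{lemma15}. Given any filtration of the form \eqref{eq3}, the subobject $E_2$ is an extension of $E_1 \in \EE_0 \subseteq \BB_2$ by $E_2/E_1 \in \EE_1 = \XX_1 \subseteq \BB_2$, hence itself lies in $\BB_2$, while $E/E_2 \in \EE_2$ by hypothesis. Uniqueness of torsion-pair decomposition then forces $E_2$ to coincide with the torsion subobject $E_T$ of $E$ relative to $(\BB_2, \EE_2)$. It remains to show that inside $E_T \in \BB_2$ the subobject $E_1 \in \EE_0$ with $E_T/E_1 \in \EE_1$ is itself uniquely determined, and for this I would exhibit a torsion pair $(\EE_0, \EE_0^\circ)$ in $\BB_2$ with $\EE_1 \subseteq \EE_0^\circ$, via Lemma \ref{lemma-Pol}.

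The vanishing $\Hom(\KK_0, \XX_1) = 0$ (and thence $\Hom(\EE_0, \EE_1) = 0$ by a long-exact-sequence argument against the extension-closure defining $\EE_0$) is easy: any $E \in \KK_0$ is a quotient of some $B \in \XX_0$, and $\Phi B \in \mathcal A$ while $\Phi F \in \mathcal A[-1]$ for $F \in \XX_1$, so $\Hom(B,F) = \Hom_{D(\mathcal A)}(\Phi B, \Phi F) = 0$, forcing any map $E \to F$ to be zero. The main obstacle is to verify that $\EE_0$ is closed under quotients inside $\BB_2$, which is needed for Lemma \ref{lemma-Pol} to produce the torsion pair. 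This is precisely where the route through Theorem \ref{theorem3} is cleaner: in the iterated construction of Section \ref{sec4}, each stage of the filtration is defined by cohomological vanishing conditions on the functors $\Phi^j$, hence is automatically closed under quotients, and the identification in Corollary \ref{coro5} then delivers the uniqueness of \eqref{eq3} without having to re-prove torsion-pair properties of $\EE_0$ directly.
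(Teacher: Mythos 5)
Your proposal is correct and follows the paper's own route: the paper proves this lemma precisely by deferring to Theorem \ref{theorem3} (whose filtration is unique because each step is the decomposition with respect to a torsion pair) and Corollary \ref{coro5} (which identifies that filtration with \eqref{eq3} when the homological dimension is two). One small correction to your closing remark: the quotient-closure of the torsion classes $\mathcal T_i = [\BB_i \cap \cdots \cap \BB_n]$ in Section \ref{sec4} is not automatic from the cohomological vanishing conditions (the intersections $\BB_i \cap \cdots \cap \BB_n$ need not themselves be closed under quotients); it is forced by the $[\,\cdot\,]$ construction via Lemma \ref{lemma17}.
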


%

Theorem \ref{theorem3} and Corollary \ref{coro5} also tell us:

\begin{coro}\label{coro2}
The categories $\EE_0, \EE_1, \EE_2$ have trivial pairwise intersections.
\end{coro}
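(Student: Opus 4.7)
The plan is to deduce the pairwise trivial intersections directly from the uniqueness of the filtration \eqref{eq3} already established (independently of the forthcoming Theorem \ref{theorem3}) in Lemma \ref{lemma16}, by exhibiting a trivial filtration for objects lying in a single $\EE_i$ and invoking uniqueness.

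The first step is to observe that for each $i \in \{0,1,2\}$, any object $E \in \EE_i$ carries a tautological filtration of the shape \eqref{eq3} in which exactly one successive quotient is nonzero, and that quotient is $E$ itself sitting in position $i$. Concretely, if $E \in \EE_0$ set $E_1 = E_2 = E_3 = E$; if $E \in \EE_1$ set $E_1 = 0$, $E_2 = E_3 = E$; and if $E \in \EE_2$ set $E_1 = E_2 = 0$, $E_3 = E$. In each case the successive quotients are $E_{i+1}/E_i \in \EE_i$ trivially (the other two quotients being zero, which lies in every $\EE_j$ by the extension-closure convention).

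Now suppose $E \in \EE_i \cap \EE_j$ with $i \neq j$. Then by the preceding paragraph, $E$ admits the tautological filtration placing its only nontrivial factor in position $i$, as well as the tautological filtration placing its only nontrivial factor in position $j$. These two filtrations differ on at least one of the intermediate subobjects $E_1, E_2$. By Lemma \ref{lemma16}, the filtration \eqref{eq3} attached to $E$ is unique, so the two tautological filtrations must coincide, which forces $E = 0$.

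The main conceptual subtlety, and the only place one must be careful, is to make sure that the uniqueness asserted in Lemma \ref{lemma16} is uniqueness of the chain of subobjects $0 \subseteq E_1 \subseteq E_2 \subseteq E_3 = E$ as subobjects of $E$, rather than merely uniqueness up to isomorphism of the associated graded pieces; since a weaker notion would allow an object of $\EE_i \cap \EE_j$ to be repackaged in either slot without contradiction. Once that point is granted (as it is by the standard functorial formulation of \eqref{eq3}), the argument above is essentially immediate, and one obtains $\EE_i \cap \EE_j = \{0\}$ for all $i \neq j$.
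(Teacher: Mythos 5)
Your argument is correct: granting that the filtration \eqref{eq3} is unique as a chain of subobjects, an object lying in $\EE_i \cap \EE_j$ for $i \neq j$ admits two distinct tautological filtrations and must therefore be zero. This is a different (more formal) route than the paper's, which deduces the corollary directly from Theorem \ref{theorem3} and Corollary \ref{coro5}: there each $\EE_i$ is identified with one of the factor categories of the torsion-theoretic filtration \eqref{eq14} (namely $\EE_0 = \mathcal T_1$, $\EE_1 = \mathcal T_2 \cap \mathcal F_1$, $\EE_2 = \mathcal T_3 \cap \mathcal F_2$), and the trivial pairwise intersections follow from the containments $\mathcal T_i \subseteq \mathcal T_{j-1}$ for $i < j$ together with the disjointness $\mathcal T_{j-1} \cap \mathcal F_{j-1} = 0$ of a torsion class from its torsion-free class. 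One inaccuracy in your setup should be flagged: Lemma \ref{lemma16} is \emph{not} established independently of Theorem \ref{theorem3} --- in the paper it is itself deduced from Theorem \ref{theorem3} and Corollary \ref{coro5}. This does not make your argument circular, since neither of those results invokes Corollary \ref{coro2}, but it does mean your proof ultimately rests on exactly the same machinery as the paper's, merely routed through the uniqueness statement rather than through the torsion/torsion-free disjointness. Your caution about uniqueness meaning uniqueness of the subobjects $E_1, E_2$ inside $E$ (rather than of the graded pieces up to isomorphism) is well placed, and it is indeed satisfied here because the filtration is built from iterated torsion-pair decompositions, whose torsion parts are canonical subobjects.
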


We have now recovered Theorem \ref{theorem1} of Jensen-Madsen-Su in its entirety:

\begin{proof}[Proof of Theorem \ref{theorem1}]
  Remark \ref{remark1} already explained why we have the existence and functoriality parts of the theorem.  The uniqueness part was Lemma \ref{lemma16}.
\end{proof}



As remarked at the end of \cite[Section 1]{JMS}, using spectral sequences gives a relatively efficient proof of Theorems \ref{theorem1} and \ref{theorem2}.  As pointed out in \cite{JMS} as well, however, it seems much more difficult to generalise the spectral sequence argument in this section to the case where  $\mathcal Z$ has homological dimension higher than two.  In other words, for higher homological dimensions, it is not clear how to define analogues of the categories $\EE_0, \EE_1, \EE_2$ using the spectral sequences \eqref{eq1} and \eqref{eq6}.

We end this section with some easy observations and  speculations on generalising Theorems \ref{theorem1} and \ref{theorem2} using spectral sequences:


\begin{lemma}\label{lemma21}
Suppose $\mathcal Z$ is a noetherian abelian category of homological dimension $n \geq 1$, and that $\Psi^j G=0$ for all $G \in \mathcal A$ and $j \notin [-n,0]$.  Let $E \in \mathcal Z$, and suppose the filtration of $E$ given by the spectral sequence \eqref{eq1} is
\begin{equation}\label{eq17}
  0=F^1E \subseteq F^0E \subseteq F^{-1}E \subseteq \cdots \subseteq F^{-n}E = E
\end{equation}
where $F^iE/F^{i+1}E \cong E^{i,-i}_\infty$ for $-n \leq i \leq 0$.  Then $E^{0,0}_\infty \in \BB_n$, and $E^{-n,n}_\infty \in \BB_0$.
\end{lemma}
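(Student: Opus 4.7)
The plan is to locate the corner terms $E^{0,0}_\infty$ and $E^{-n,n}_\infty$ within their $E_2$-pages, and then to control them by feeding $\Phi^0 E$ and $\Phi^n E$ into the auxiliary spectral sequence \eqref{eq6}.

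First I would observe that, by the homological dimension hypothesis on $\mathcal Z$ and the stated bound on $\Psi$, the $E_2$-page $E_2^{p,q} = \Psi^p(\Phi^q E)$ of \eqref{eq1} vanishes outside the rectangle $-n \leq p \leq 0$, $0 \leq q \leq n$. Inspecting the differential $d_r : E_r^{p,q} \to E_r^{p+r, q-r+1}$ at the two corners of interest: all differentials out of $E_r^{0,0}$ land outside the rectangle (their $p$-coordinate is $r \geq 2$), and all differentials into $E_r^{-n,n}$ originate outside the rectangle (their $p$-coordinate is $-n-r$). Consequently $E^{0,0}_\infty$ is a quotient of $\Psi^0(\Phi^0 E)$, and $E^{-n,n}_\infty$ is a subobject of $\Psi^{-n}(\Phi^n E)$.

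Next, I would apply \eqref{eq6} to an arbitrary $M \in \mathcal A$, using the same vanishing ranges. A bookkeeping argument at the corners $(n,0)$ and $(0,-n)$ of $E_2^{p,q} = \Phi^p(\Psi^q M)$ shows that both $E_r^{n,0}$ and $E_r^{0,-n}$ have neither incoming nor outgoing nonzero differentials for $r \geq 2$, so they survive unchanged to $E_\infty$. Since these corners sit in total degrees $n$ and $-n$ respectively, while the spectral sequence converges to $M$ concentrated in total degree $0$, both must vanish whenever $n \geq 1$. This gives $\Phi^n(\Psi^0 M) = 0$ and $\Phi^0(\Psi^{-n} M) = 0$ for every $M \in \mathcal A$, i.e.\ $\Psi^0 M \in \BB_n$ and $\Psi^{-n} M \in \BB_0$.

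Specialising to $M = \Phi^0 E$ and $M = \Phi^n E$, we obtain $\Psi^0(\Phi^0 E) \in \BB_n$ and $\Psi^{-n}(\Phi^n E) \in \BB_0$. Since $\BB_n$ is closed under quotients and $\BB_0$ under subobjects in $\mathcal Z$ (both recalled in Section \ref{sec1}), the conclusions $E^{0,0}_\infty \in \BB_n$ and $E^{-n,n}_\infty \in \BB_0$ follow from the first step. The only mildly delicate point is the simultaneous bookkeeping of the four corner quadrants and their differentials; once the vanishing rectangles are written down carefully, the argument is a mechanical consequence of convergence together with the closure properties of $\BB_0$ and $\BB_n$.
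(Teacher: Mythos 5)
Your proof is correct and takes essentially the same route as the paper's: realise $E^{0,0}_\infty$ as a quotient of $E_2^{0,0}=\Psi^0(\Phi^0 E)$ and $E^{-n,n}_\infty$ as a subobject of $E_2^{-n,n}=\Psi^{-n}(\Phi^n E)$, show these $E_2$-terms lie in $\BB_n$ and $\BB_0$ via the spectral sequence \eqref{eq6}, and conclude using the closure of $\BB_n$ under quotients and of $\BB_0$ under subobjects. The only difference is that you spell out the corner analysis of \eqref{eq6}, which the paper leaves as an unelaborated assertion.
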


\begin{proof}
Observe that we have a series of surjections in $\mathcal Z$
\[
  E^{0,0}_2 \twoheadrightarrow E^{0,0}_3 \twoheadrightarrow \cdots \twoheadrightarrow E^{0,0}_n \twoheadrightarrow E^{0,0}_\infty.
\]
Since $E^{0,0}_2 \in \BB_n$ (this follows from the spectral sequence \eqref{eq6}) and $\BB_n$ is closed under quotients in $\mathcal Z$, we have $E^{0,0}_\infty \in \BB_n$.  The proof for the second part is similar: we use the series of injections in $\mathcal Z$
\[
   E^{-n,n}_\infty \hookrightarrow E^{-n,n}_n \hookrightarrow \cdots \hookrightarrow E^{-n,n}_2,
\]
and  note that $E^{-n,n}_2 \in \BB_0$ and that $\BB_0$ is closed under taking subobjects in  $\mathcal Z$.
\end{proof}

\begin{remark}\label{remark5}
In fact, in the proof of Lemma \ref{lemma21} above, for any object $E \in \mathcal Z$, the spectral sequence \eqref{eq6} gives us $E^{0,0}_2 \in \BB_{n-1} \cap \BB_n$ and $E^{-n,n}_2 \in \BB_0 \cap \BB_1$.  Using the notation in \eqref{eq19}, we have $E^{0,0}_\infty \in [\BB_{n-1} \cap \BB_n]$.
\end{remark}

Judging from the way we used Lemmas \ref{lemma11} and \ref{lemma12} to obtain the filtration \eqref{eq8}, one might guess that, when the homological dimension of $\mathcal Z$ is higher than two, we could repeatedly filter the intermediate terms $\{F^iE/F^{i+1}E\}_{-n \leq i \leq 0}$ in \eqref{eq17} until they stabilise as in Lemma \ref{lemma12}.

\section{A generalisation of Jensen-Madsen-Su's filtration}\label{sec4}



In this section, we give a generalisation of Jensen-Madsen-Su's filtration of the category $\mathcal Z$, for any noetherian abelian category $\mathcal Z$ having any finite homological dimension.  For this generalisation, we do not assume that the spectral condition holds.  The idea is to produce $n$ torsion pairs when $\mathcal Z$ has homological dimension $n$, and take intersections of these torsion and torsion-free classes.  These intersections will be extension-closed subcategories of $\mathcal Z$ that contain the factors of the filtration for any $E \in \mathcal Z$.  To produce these torsion pairs in $\mathcal Z$, we need:

\begin{lemma}\label{lemma17}
Let $\mathcal Z$ be an abelian category, and $\mathcal S$ any full subcategory of $\mathcal Z$. Let $[S]$ denote the extension-closure generated by $\mathcal Z$-quotients of objects in $S$, i.e.\
\begin{equation}\label{eq19}
  [S] := \langle \{ E \in \mathcal Z : \exists \, E' \twoheadrightarrow E \text{ in $\mathcal Z$}, E' \in S\}\rangle.
\end{equation}
Then  $[S]$ is closed under quotients in $\mathcal Z$.
\end{lemma}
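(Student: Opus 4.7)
The plan is to reduce the statement to an elementary claim about extension-closures of quotient-closed subcategories, and then settle that claim by a short induction on extension length.

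First I would unpack the definition by writing
\[
Q := \{ E \in \mathcal Z : \exists\ E' \twoheadrightarrow E \text{ in } \mathcal Z \text{ with } E' \in \mathcal S\},
\]
so that $[S] = \langle Q \rangle$. The first (and only routine) observation is that $Q$ is already closed under $\mathcal Z$-quotients: given a chain of surjections $E' \twoheadrightarrow E \twoheadrightarrow E''$ with $E' \in \mathcal S$, the composition is a surjection from an object of $\mathcal S$ onto $E''$, witnessing $E'' \in Q$. So the lemma reduces to the general assertion that the extension-closure of any quotient-closed full subcategory of an abelian category is itself quotient-closed.

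To prove this assertion, I would filter the extension-closure in the standard way: set $Q_0 := \{0\} \cup Q$, and for $k \geq 0$ let $Q_{k+1}$ consist of all $X$ fitting into some short exact sequence $0 \to A \to X \to B \to 0$ with $A, B \in Q_k$. Then $\langle Q \rangle = \bigcup_{k \geq 0} Q_k$, and I would prove by induction on $k$ that every $\mathcal Z$-quotient of every object of $Q_k$ lies in $\langle Q \rangle$. The base case $k=0$ is precisely the assumption on $Q$ (together with the trivial case of $0$).

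For the inductive step, take $X \in Q_{k+1}$ in a short exact sequence $0 \to A \to X \to B \to 0$ with $A, B \in Q_k$, and let $\pi : X \twoheadrightarrow Y$ be any surjection in $\mathcal Z$. Set $A' := \pi(A) \subseteq Y$. Then $A'$ is a $\mathcal Z$-quotient of $A$, so $A' \in \langle Q \rangle$ by the inductive hypothesis. On the other hand, the composition $X \twoheadrightarrow Y \twoheadrightarrow Y/A'$ vanishes on $A$ and hence factors through $B$, yielding a surjection $B \twoheadrightarrow Y/A'$; so $Y/A'$ is a $\mathcal Z$-quotient of $B$ and by the inductive hypothesis also lies in $\langle Q \rangle$. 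Thus $Y$ is an extension of $Y/A'$ by $A'$ with both terms in $\langle Q \rangle$, giving $Y \in \langle Q \rangle$ and closing the induction. The only possible obstacle is setting up the filtration $\{Q_k\}$ correctly; the diagram-chase step itself needs nothing more than the fact that a surjection carries a short exact sequence to a short exact sequence of images, so no appeal to the snake lemma is required.
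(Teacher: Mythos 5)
Your proof is correct and takes essentially the same route as the paper's: both push the filtration (or extension) defining membership in $[S]$ forward along the given surjection, using the induced surjections $E_i/E_{i-1} \twoheadrightarrow \phi(E_i)/\phi(E_{i-1})$ on the factors. The only cosmetic difference is that you first isolate the observation that the class $Q$ of quotients of objects of $\mathcal S$ is itself quotient-closed and then induct on extension length, whereas the paper pushes an entire $m$-step filtration forward in one stroke.
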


\begin{proof}
Take any object $E \in [S]$.  Then we have a filtration in $\mathcal Z$
\[
 0=E_0 \subseteq E_1 \subseteq E_2 \subseteq \cdots \subseteq E_m = E
\]
along with some surjections $G_i \twoheadrightarrow E_i/E_{i-1}$ in $\mathcal Z$, for  $1 \leq i \leq m$.  For any surjection $\phi : E \to E'$ in $\mathcal Z$, we have the following filtration for $E'$:
\[
  0 \subseteq \phi (E_1) \subseteq \phi (E_2) \subseteq \cdots \subseteq \phi (E_m)= E',
\]
where each $\phi (E_i)$ denotes the image of $E_i$ under $\phi$; we also have induced surjections $E_i/E_{i-1} \twoheadrightarrow \phi (E_i)/\phi (E_{i-1})$.  This completes the proof of the lemma.
\end{proof}

\begin{coro}\label{coro3}
Let $\mathcal Z$ be a noetherian abelian category, and $S$ any full subcategory of $\mathcal Z$. Then we have a torsion pair $([S], [S]^\circ)$ in $\mathcal Z$.
\end{coro}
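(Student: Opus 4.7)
The plan is to observe that the corollary is essentially a direct application of Lemma \ref{lemma-Pol} to the subcategory $[S]$, with Lemma \ref{lemma17} supplying the crucial quotient-closure hypothesis.

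By the very definition \eqref{eq19}, $[S]$ is closed under extensions in $\mathcal Z$, since it is constructed as an extension-closure. Lemma \ref{lemma17} just established that $[S]$ is also closed under $\mathcal Z$-quotients. Since $\mathcal Z$ is noetherian by hypothesis, these two properties are exactly the hypotheses of Polishchuk's Lemma \ref{lemma-Pol}, which will produce a torsion pair $([S], \mathcal F)$ in $\mathcal Z$ whose torsion class is exactly $[S]$.

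The remaining task is to identify the torsion-free class $\mathcal F$ with $[S]^\circ$. The inclusion $\mathcal F \subseteq [S]^\circ$ is immediate from the definition of a torsion pair, since $\Hom_{\mathcal Z}([S],\mathcal F)=0$. For the reverse inclusion, given any $E \in [S]^\circ$, I would consider its canonical torsion decomposition $0 \to T \to E \to F \to 0$ with $T \in [S]$ and $F \in \mathcal F$; the monomorphism $T \hookrightarrow E$ is a morphism from an object of $[S]$ into an object of $[S]^\circ$, hence is zero, forcing $T=0$ and $E \cong F \in \mathcal F$.

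There is no real obstacle here: all the work has been carried out in Lemma \ref{lemma17} and in the preparation of Lemma \ref{lemma-Pol}. The only mild subtlety is the final identification of the torsion-free class, but this is standard and follows the same pattern as earlier identifications in the paper (compare the arguments for $\BB_1^\circ = \BB_0$ in Lemma \ref{lemma20} and $\BB_2^\circ = \BB_0 \cap \XX_1^\circ$ in Lemma \ref{lemma6}).
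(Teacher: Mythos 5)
Your proposal is correct and matches the paper's own proof exactly: both deduce quotient-closure of $[S]$ from Lemma \ref{lemma17}, note extension-closure from the definition, and apply Lemma \ref{lemma-Pol}. Your additional verification that the torsion-free class is $[S]^\circ$ is a standard fact about torsion pairs that the paper leaves implicit, so nothing is gained or lost.
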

\begin{proof}
By Lemma \ref{lemma17},  $[S]$ is closed under taking quotients and extensions in $\mathcal Z$.  The corollary then follows from Lemma \ref{lemma-Pol}.
\end{proof}

%


For the rest of this section, assume that $\mathcal Z$ is a noetherian abelian category of homological dimension $n$, where $n$ is finite.  For any $1 \leq i \leq n$, let us define
\begin{align*}
  \mathcal T_i &:= [\BB_i \cap \cdots \cap \BB_{n-1} \cap \BB_n], \\
  \mathcal F_i &:= \mathcal T_i^\circ = \{ E \in \mathcal Z : \Hom_{\mathcal Z}(\mathcal T_i,E)=0\}.
\end{align*}
We also define $\mathcal T_{n+1} = \mathcal Z$ and $\mathcal F_{n+1}=\{0\}$.  By Corollary \ref{coro3}, we have a torsion pair $(\mathcal T_i, \mathcal F_i)$ in $\mathcal Z$ for each $1\leq i \leq n+1$.

We  can now prove a generalisation of Jensen-Madsen-Su's Theorem \ref{theorem1}:

\begin{theorem}\label{theorem3}
Let $\mathcal Z$ be a noetherian abelian category of finite homological dimension $n$. Given any $E \in \mathcal Z$, there is a unique filtration of $E$ in $\mathcal Z$
\begin{equation}\label{eq14}
0 =: E_0 \subseteq E_1 \subseteq E_2 \subseteq \cdots \subseteq E_{n} \subseteq E_{n+1} := E
\end{equation}
where for all $1 \leq i \leq n+1$, we have
\[
E_i \in \mathcal T_i  \text{\quad and \quad} E_i/E_{i-1} \in \mathcal T_i \cap \mathcal F_{i-1}.
\]
Besides, the categories $\mathcal T_i \cap \mathcal F_{i-1}$ are extension-closed and have pairwise trivial intersections.
\end{theorem}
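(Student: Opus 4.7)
The plan is to construct the filtration \eqref{eq14} by iterating the torsion decompositions associated to the pairs $(\mathcal T_i, \mathcal F_i)$ established earlier in the section, setting $\mathcal F_0 := \mathcal Z$ for uniformity of notation. First I would record the monotonicity $\mathcal T_1 \subseteq \mathcal T_2 \subseteq \cdots \subseteq \mathcal T_{n+1} = \mathcal Z$: the generating sets $\BB_i \cap \cdots \cap \BB_n$ are manifestly nested in $i$, and the closure operator $[-]$ of Lemma \ref{lemma17} is visibly monotone. The bijection between torsion and torsion-free classes then yields the reverse chain $\mathcal Z = \mathcal F_0 \supseteq \mathcal F_1 \supseteq \cdots \supseteq \mathcal F_{n+1} = 0$.

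For existence, given any $E \in \mathcal Z$, I would define $E_i$ to be the torsion part of $E$ with respect to $(\mathcal T_i, \mathcal F_i)$, i.e.\ the largest subobject of $E$ lying in $\mathcal T_i$. Since $\mathcal T_{i-1} \subseteq \mathcal T_i$, the maximality characterisation immediately gives $E_{i-1} \subseteq E_i$, with $E_0 = 0$ and $E_{n+1} = E$. By construction $E_i \in \mathcal T_i$, and the quotient $E_i/E_{i-1}$ remains in $\mathcal T_i$ because $\mathcal T_i$ is closed under quotients in $\mathcal Z$ by Lemma \ref{lemma17}.

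The main step, which I expect to be the sole nontrivial point of the proof, is to verify that $E_i/E_{i-1} \in \mathcal F_{i-1}$. Suppose, for contradiction, that a nonzero morphism $f : T \to E_i/E_{i-1}$ exists with $T \in \mathcal T_{i-1}$, and let $I \subseteq E_i/E_{i-1}$ be its image. Since $\mathcal T_{i-1}$ is closed under quotients in $\mathcal Z$, we have $I \in \mathcal T_{i-1}$. Pulling back $I$ along the surjection $E_i \twoheadrightarrow E_i/E_{i-1}$, we obtain a subobject $\tilde I \subseteq E_i$ fitting into a short exact sequence
\[
 0 \to E_{i-1} \to \tilde I \to I \to 0
\]
with both $E_{i-1}$ and $I$ in $\mathcal T_{i-1}$. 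Because $\mathcal T_{i-1}$ is extension-closed, $\tilde I \in \mathcal T_{i-1}$; but then $\tilde I \subseteq E$ together with the maximality of $E_{i-1}$ forces $\tilde I = E_{i-1}$, so $I = 0$, contradicting the nonvanishing of $f$.

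The remaining claims are formal. For uniqueness, given any filtration $(E'_i)$ with the stated properties, the quotient $E/E'_i$ is an iterated extension of the factors $E'_j / E'_{j-1} \in \mathcal T_j \cap \mathcal F_{j-1}$ for $j > i$; since $\mathcal F_{j-1} \subseteq \mathcal F_i$ and $\mathcal F_i$ is extension-closed, $E/E'_i \in \mathcal F_i$, so $0 \to E'_i \to E \to E/E'_i \to 0$ realises the unique torsion decomposition of $E$ for the torsion pair $(\mathcal T_i, \mathcal F_i)$, forcing $E'_i = E_i$. Each $\mathcal T_i \cap \mathcal F_{i-1}$ is extension-closed as the intersection of a torsion class and a torsion-free class. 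Finally, if $X \in (\mathcal T_i \cap \mathcal F_{i-1}) \cap (\mathcal T_j \cap \mathcal F_{j-1})$ with $i < j$, then $X \in \mathcal T_i \subseteq \mathcal T_{j-1}$ while $X \in \mathcal F_{j-1}$, so $X = 0$ by the defining property of the torsion pair $(\mathcal T_{j-1}, \mathcal F_{j-1})$.
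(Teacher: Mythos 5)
Your proof is correct and follows essentially the same strategy as the paper: iterated torsion decompositions with respect to the nested torsion pairs $(\mathcal T_i,\mathcal F_i)$, with extension-closure and the containments $\mathcal T_i \subseteq \mathcal T_{j-1}$ giving the disjointness, and uniqueness of torsion decompositions giving uniqueness of the filtration. The only (immaterial) difference is that you define each $E_i$ directly as the torsion part of $E$ for $(\mathcal T_i,\mathcal F_i)$ and verify the factor conditions by maximality, whereas the paper builds the chain top-down by taking $E_{i-1}$ to be the torsion part of $E_i$; these yield the same subobjects.
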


\begin{proof}
We construct the filtration \eqref{eq14} one term at a time, starting from the right-hand side. Note that $E_{n+1} = E \in \mathcal T_{n+1}$ by definition.

Using the torsion pair $(\mathcal T_n,\mathcal F_n)$, which is the same as $(\BB_n, \BB_n^\circ)$, we can write $E$ as an extension
\[
 0 \to E' \to E \to E'' \to 0
\]
where $E' \in \mathcal T_n$ and $E'' \in \mathcal F_n$.  We define $E_n := E'$, so that $E_n \in \mathcal T_n$.  Then $E_{n+1}/E_n = E/E' \cong E'' \in \mathcal F_n = \mathcal T_{n+1} \cap \mathcal F_n$.

From here on, for each $i$ in the sequence $n, n-1, \cdots, 2$, we use the torsion pair $(\mathcal T_{i-1}, \mathcal F_{i-1})$ to write $E_i$ as an extension
\[
 0 \to E_i' \to E_i \to E_i'' \to 0
\]
where $E_i' \in \mathcal T_{i-1}$ and $E_i'' \in \mathcal F_{i-1}$.  Then we define $E_{i-1} := E_i'$, so $E_{i-1} \in \mathcal T_{i-1}$, while $E_i/E_{i-1} \in \mathcal F_{i-1}$.  Since $E_i \in \mathcal T_i$ and $\mathcal T_i$ is closed under quotients in $\mathcal Z$, we have $E_i/E_{i-1} \in \mathcal T_i \cap \mathcal F_{i-1}$.

Since  the categories $\mathcal T_i, \mathcal F_i$ are extension-closed for all $i$, the intersection $\mathcal T_i \cap \mathcal F_{i-1}$ is also extension-closed for all $i$.

For any $1 \leq i < j \leq n+1$, we have $\mathcal T_i \subseteq \mathcal T_{j-1}$ from the definition of these categories.  On the other hand, $T_{j-1}$ and $\mathcal T_j \cap \mathcal F_{j-1}$ have trivial intersection.  Hence $\mathcal T_i \cap \mathcal F_{i-1}$ and $\mathcal T_j \cap \mathcal F_{j-1}$ have trivial intersection.  Lastly, the uniqueness of such a filtration follows from the fact that, with respect to any torsion pair, every object in $\mathcal Z$ has a unique filtration by its torsion part and torsion-free part.
\end{proof}

Now, we check that our filtration \eqref{eq14} coincides with Jensen-Madsen-Su's filtration \eqref{eq3} when $\mathcal Z$ has homological dimension 2 and the spectral condition holds.
We begin with the observation that the category $\KK_0$ in Section \ref{sec3} has a simpler description:

\begin{lemma}\label{lemma18}
Suppose $\mathcal Z$ is an abelian category of homological dimension 2 and the spectral condition holds.  Then any quotient $E$ of an object in $\XX_0$ lies in $\KK_0$, i.e.\ $E$ fits in a short exact sequence in $\mathcal Z$
\[
 0 \to A \to B \to E \to 0
\]
where $A \in \XX_2$ and $B \in \XX_0$.  As a result,
\begin{align*}
  \KK_0 &= \{ \text{$\mathcal Z$-quotients of objects in $\XX_0$}\}, \text{ and} \\
  \EE_0 &= [\XX_0] = [\BB_1 \cap \BB_2].
\end{align*}
\end{lemma}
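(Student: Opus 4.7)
The plan is to exploit the spectral sequence \eqref{eq1} for $E$: once I verify that the terms $E^{-1,1}_2$ and $E^{-2,2}_2$ both vanish, Lemma \ref{lemma11} will collapse the filtration of $E$ to $E = F^0 E \in \KK_0$. So the real task reduces to producing these two vanishings under the hypothesis that $E$ is a quotient of some $B \in \XX_0$.

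To carry this out, I will fix a surjection $B \twoheadrightarrow E$ with $B \in \XX_0$ and let $A := \ker(B \to E)$. Applying $\Phi$ to the short exact sequence $0 \to A \to B \to E \to 0$ and using $\Phi^1 B = \Phi^2 B = 0$ (since $B \in \XX_0$), together with the vanishing of $\Phi^j$ outside $[0,2]$ on $\mathcal Z$ (as $\mathcal Z$ has homological dimension $2$), the long exact sequence of cohomology immediately yields $\Phi^2 E = 0$ and $\Phi^1 E \cong \Phi^2 A$. Lemma \ref{lemma7} applied to $A \in \mathcal Z$ then gives $\Phi^2 A \in \YY_{-2}$, hence $\Phi^1 E \in \YY_{-2}$, and in particular $\Psi^{-1}(\Phi^1 E) = 0$. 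This is precisely $E^{-1,1}_2 = 0$, while $E^{-2,2}_2 = \Psi^{-2}(\Phi^2 E) = 0$ is automatic. Lemma \ref{lemma11} therefore delivers $E \in \KK_0$, establishing the first assertion.

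The remaining equalities follow formally. The inclusion $\KK_0 \subseteq \{\text{$\mathcal Z$-quotients of objects in $\XX_0$}\}$ is immediate from the definition of $\KK_0$, and the reverse inclusion is what the preceding paragraph supplies. Consequently $\EE_0 = \langle \KK_0 \rangle = [\XX_0]$, by the definition of $\EE_0$ as the extension-closure of $\KK_0$ together with the definition of $[\cdot]$ from Lemma \ref{lemma17}. Lastly, $\XX_0 = \BB_1 \cap \BB_2$ is elementary: for any $E \in \mathcal Z$, $\Phi^j E = \Ext^j_{\mathcal Z}(T,E)$ vanishes for $j < 0$, so the WIT$_0$ condition on $E$ collapses to the simultaneous vanishing $\Phi^1 E = \Phi^2 E = 0$.

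I do not anticipate a real obstacle: the entire argument is a short bookkeeping exercise once Lemma \ref{lemma7} (yielding $\Phi^2 A \in \YY_{-2}$) and the spectral-sequence filtration of Lemma \ref{lemma11} are on hand. The only mildly non-obvious step is noticing that $\Phi^1 E$ can be identified with $\Phi^2 A$, which is what lets Lemma \ref{lemma7} be brought to bear on $\Phi^1 E$ even though that lemma only directly controls degrees $0$ and $2$.
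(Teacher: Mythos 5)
Your proof is correct and follows essentially the same route as the paper's: use the long exact sequence of the defining short exact sequence to identify $\Phi^1 E$ with $\Phi^2$ of the kernel and to kill $\Phi^2 E$, apply Lemma \ref{lemma7} so that $E^{-1,1}_2 = \Psi^{-1}(\Phi^1 E)$ vanishes, and then read off $E \in \KK_0$ from the spectral sequence \eqref{eq1}. The only cosmetic difference is that you delegate the final step to the filtration of Lemma \ref{lemma11}, whose top two factors you show vanish, whereas the paper concludes $E \cong E^{0,0}_\infty$ and invokes the exact sequence \eqref{eq4} together with Lemma \ref{lemma9} directly.
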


\begin{proof}
Suppose we have a short exact sequence in $\mathcal Z$
\begin{equation}\label{eq12}
0 \to C \to D \to E \to 0
\end{equation}
where $D \in \XX_0$.  Since $\XX_0 = \BB_1 \cap \BB_2$ when $\mathcal Z$ has  homological dimension two, and $\BB_2$ is closed under quotients in $\mathcal Z$, we have $E \in \BB_2$.

On the other hand, applying $\Phi$ to \eqref{eq12} and taking the long exact sequence of cohomology, we see that $\Phi^1 E \cong \Phi^2 C$.  By Lemma \ref{lemma7}, $\Psi^p\Phi^1 E=0$ for all $p \neq -2$.  Thus $E \cong E^{0,0}_\infty$, and the spectral sequence \eqref{eq1} gives us a short exact sequence in $\mathcal Z$
\[
 0 \to \Psi^{-2}\Phi^1 E \to \Psi^0 \Phi^0 E \to E \to 0.
\]
By Lemma \ref{lemma9}, we see that $E$ lies in $\KK_0$.  The rest is clear.
\end{proof}



\begin{lemma}\label{lemma19}
When $\mathcal Z$ has homological dimension 2 and the spectral condition holds, we have $\BB_2 \cap \EE_0^\circ = \XX_1$.
\end{lemma}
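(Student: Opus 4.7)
The plan is to prove the two inclusions $\XX_1 \subseteq \BB_2 \cap \EE_0^\circ$ and $\BB_2 \cap \EE_0^\circ \subseteq \XX_1$ separately.

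For the forward inclusion $\XX_1 \subseteq \BB_2 \cap \EE_0^\circ$, containment in $\BB_2$ is immediate from the definitions. For containment in $\EE_0^\circ$, the key observation is that $\Hom_{\mathcal Z}(\XX_0,\XX_1)=0$, because $\Phi$ takes $\XX_0$ into $\mathcal A$ and $\XX_1$ into $\mathcal A[-1]$, and $\Hom_{D(\mathcal A)}(\mathcal A,\mathcal A[-1])=0$. By Lemma \ref{lemma18}, $\EE_0 = [\XX_0]$ is the extension-closure of quotients of $\XX_0$-objects, so it suffices to extend the Hom-vanishing first to $\mathcal Z$-quotients of $\XX_0$ (using that any surjection $B \twoheadrightarrow Q$ with $B \in \XX_0$ makes any morphism $Q \to E$ with $E \in \XX_1$ vanish after precomposition with $B \to Q$), and then to extension closures (using the standard three-term Hom exact sequence).

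For the reverse inclusion $\BB_2 \cap \EE_0^\circ \subseteq \XX_1$, let $E \in \BB_2 \cap \EE_0^\circ$. Since $E \in \BB_2$, $\Phi^2 E = 0$, so $\Phi E$ has cohomology concentrated in degrees $0$ and $1$. The truncation triangle in $D(\mathcal A)$
\[
\Phi^0 E \to \Phi E \to \Phi^1 E [-1]
\]
pulls back under $\Psi$ to a triangle in $D(\mathcal Z)$
\[
\Psi(\Phi^0 E) \to E \to \Psi(\Phi^1 E)[-1].
\]
By Lemma \ref{lemma7}, $\Phi^0 E \in \YY_0$, so $\Psi(\Phi^0 E) = \Psi^0(\Phi^0 E) \in \XX_0$ is concentrated in degree $0$ of $\mathcal Z$. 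Hence the first map of the triangle is a genuine morphism $\Psi^0(\Phi^0 E) \to E$ in $\mathcal Z$. Since $\XX_0 \subseteq \EE_0$ and $E \in \EE_0^\circ$, this morphism must be zero.

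The main step is then to exploit this vanishing. Taking the long exact sequence in $\mathcal Z$-cohomology of the triangle above, the fact that $\Psi^0(\Phi^0 E) \to E$ is zero forces the preceding map $\Psi^{-2}(\Phi^1 E) \to \Psi^0(\Phi^0 E)$ to be a surjection in $\mathcal Z$. By Lemma \ref{lemma9}, $\Psi^{-2}(\Phi^1 E) \in \XX_2$, while $\Psi^0(\Phi^0 E) \in \XX_0$; since $\Hom_{\mathcal Z}(\XX_2,\XX_0) = \Hom_{D(\mathcal A)}(\mathcal A[-2],\mathcal A) = 0$, any such surjection is zero, forcing $\Psi^0(\Phi^0 E) = 0$, and therefore $\Phi^0 E = 0$ via the equivalence $\Psi : \YY_0 \xrightarrow{\sim} \XX_0$. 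Combined with $\Phi^2 E = 0$, this gives $E \in \XX_1$. The main delicate point is organizing the long exact sequence argument so that the relevant cohomology terms really do land in $\XX_0$ and $\XX_2$, which is where Lemmas \ref{lemma7} and \ref{lemma9} do all the work.
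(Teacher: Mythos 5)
Your forward inclusion is correct and is essentially the paper's argument: $\Hom_{\mathcal Z}(\XX_0,\XX_1)=0$, this vanishing passes to $\mathcal Z$-quotients of $\XX_0$-objects and then to their extension-closure $\EE_0$. The problem is in the reverse inclusion, at the step where you kill $\Psi^0(\Phi^0 E)$. You assert $\Hom_{\mathcal Z}(\XX_2,\XX_0)=\Hom_{D(\mathcal A)}(\mathcal A[-2],\mathcal A)=0$, but you have computed the shift in the wrong direction: for $M,N\in\mathcal A$ one has $\Hom_{D(\mathcal A)}(M[-2],N)=\Hom_{D(\mathcal A)}(M,N[2])=\Ext^2_{\mathcal A}(M,N)$, which has no reason to vanish (the endomorphism algebra of a tilting object of homological dimension two generally has global dimension greater than one). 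Indeed, the categories $\KK_0$ and $\KK_2$ are \emph{defined} using nonzero injections and surjections from objects of $\XX_2$ to objects of $\XX_0$; if $\Hom_{\mathcal Z}(\XX_2,\XX_0)$ were zero, $\KK_0$ would collapse to $\XX_0$ and the whole point of Theorems \ref{theorem1} and \ref{theorem2} would evaporate. So ``a surjection from an $\XX_2$-object onto an $\XX_0$-object must vanish'' is not a valid step.

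The gap is easy to repair, and the repair is exactly what the paper does. The long exact sequence you wrote down begins $0\to \Psi^{-2}(\Phi^1E)\to\Psi^0(\Phi^0E)\overset{\gamma}{\to}E\to\Psi^{-1}(\Phi^1E)\to 0$, so once $\gamma=0$ the map $\Psi^{-2}(\Phi^1E)\to\Psi^0(\Phi^0E)$ is not merely surjective but an isomorphism. An object isomorphic both to an object of $\XX_2$ and to an object of $\XX_0$ lies in $\XX_2\cap\XX_0=0$, so both terms vanish; then $\Phi^0E=0$ via the equivalence $\YY_0\simeq\XX_0$, and together with $\Phi^2E=0$ this gives $E\in\XX_1$. (Your final step, deducing $E\in\XX_1$ directly from $\Phi^0E=\Phi^2E=0$, is in fact slightly more direct than the paper's, which passes through $E\cong\Psi^{-1}\Phi^1E$ and invokes Lemma \ref{lemma12}.)
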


\begin{proof}
Clearly, $\XX_1 \subseteq \BB_2$.  Given any $E \in \XX_1$, we also want to show $E \in \EE_0^\circ$.  Since $\EE_0$ is the extension closure of $\KK_0$,  it suffices to show that any morphism $G \overset{\al}{\to} E$ in $\mathcal Z$, where $G \in \KK_0$, is the zero morphism.  However, by the definition of $\KK_0$, we have a surjection $G' \overset{\beta}{\twoheadrightarrow} G$ in $\mathcal Z$ where $G' \in \XX_0$.  Since $\Hom (\XX_0,\XX_1)=0$, $\al \beta$ must be zero, and so $\al$ itself must be zero.  Hence $\XX_1 \subseteq \BB_2 \cap \EE_0^\circ$.

Conversely, suppose $E \in \BB_2 \cap \EE_0^\circ$.  By applying $\Psi$ to the exact triangle $\Phi^0E \to \Phi E \to \Phi^1 E [-1]$ and taking the long exact sequence of cohomology (or, equivalently, by using the spectral sequence \eqref{eq1}), we get an exact sequence in $\mathcal Z$
\[
 0 \to \Psi^{-2}\Phi^1 E \to \Psi^0 \Phi^0 E \overset{\gamma}{\to} E \to \Psi^{-1}\Phi^1 E \to 0.
\]
Since $\Psi^0 \Phi^0E \in \XX_0$ by Lemma \ref{lemma9} and $E \in \EE_0^\circ$ (and $\XX_0 \subseteq \EE_0$), we have $\gamma=0$.  Hence $\Psi^{-2}\Phi^1 E \cong \Psi^0\Phi^0E$, forcing both these two terms to be zero by Lemma \ref{lemma9} and since $\XX_2 \cap \XX_0=0$.  Hence $E \cong \Psi^{-1}\Phi^1 E$; by Lemma \ref{lemma12}, we have $E \in \XX_1$, as wanted.
\end{proof}

\begin{coro}\label{coro5}
The filtration \eqref{eq14} coincides with the filtration \eqref{eq3} when $\mathcal Z$ has homological dimension 2 and the spectral condition holds.
\end{coro}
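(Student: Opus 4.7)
The plan is to show that, under homological dimension $2$ together with the spectral condition, the successive factors of the filtration \eqref{eq14} produced by Theorem \ref{theorem3} land in exactly the three categories $\EE_0$, $\EE_1$, $\EE_2$ of Jensen-Madsen-Su, after which the uniqueness clause in Theorem \ref{theorem3} forces the two filtrations to coincide.

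First I would unpack the categories $\mathcal T_i$ and $\mathcal F_i$ when $n=2$. By Lemma \ref{lemma18} we have $\mathcal T_1 = [\BB_1 \cap \BB_2] = [\XX_0] = \EE_0$. Since $\BB_2$ is already closed under quotients and extensions in $\mathcal Z$, the extension-closure in the definition of $\mathcal T_2$ is redundant, so $\mathcal T_2 = \BB_2$, and therefore $\mathcal F_2 = \BB_2^\circ = \EE_2$ by Lemma \ref{lemma15}. The middle piece is exactly Lemma \ref{lemma19}: $\mathcal T_2 \cap \mathcal F_1 = \BB_2 \cap \EE_0^\circ = \XX_1 = \EE_1$.

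Combining these identifications, the filtration $0 \subseteq E_1 \subseteq E_2 \subseteq E_3 = E$ supplied by Theorem \ref{theorem3} has $E_1 \in \EE_0$, $E_2/E_1 \in \EE_1$, and $E/E_2 \in \EE_2$, so it is already of the form \eqref{eq3}. Conversely, given any filtration $0 \subseteq E_1' \subseteq E_2' \subseteq E$ whose factors lie in $\EE_0, \EE_1, \EE_2$, I would verify the hypotheses that make Theorem \ref{theorem3} applicable: $E_1' \in \EE_0 = \mathcal T_1$ is immediate, while $E_2'$ is an extension of an object in $\XX_1$ by an object in $\EE_0$, both contained in $\BB_2 = \mathcal T_2$, so $E_2' \in \mathcal T_2$ by extension-closure of $\BB_2$. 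Hence by the uniqueness assertion in Theorem \ref{theorem3}, this Jensen-Madsen-Su-style filtration must agree with \eqref{eq14}.

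I do not anticipate a genuine obstacle: the only nontrivial identification, $\BB_2 \cap \EE_0^\circ = \XX_1$, has already been isolated as Lemma \ref{lemma19}, and the remaining checks are bookkeeping. The argument is logically independent of Lemma \ref{lemma16}, so no circularity arises when the corollary is then invoked to establish uniqueness of the Jensen-Madsen-Su filtration.
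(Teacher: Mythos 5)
Your proposal is correct and follows essentially the same route as the paper: both identify $\mathcal T_1=[\XX_0]=\EE_0$, $\mathcal T_2\cap\mathcal F_1=\BB_2\cap\EE_0^\circ=\XX_1=\EE_1$, and $\mathcal T_3\cap\mathcal F_2=\BB_2^\circ=\EE_2$ via Lemmas \ref{lemma18}, \ref{lemma19} and \ref{lemma15}. Your extra converse step (checking that any Jensen--Madsen--Su-style filtration satisfies the hypotheses of Theorem \ref{theorem3}, so uniqueness forces agreement) is left implicit in the paper but is a sound and welcome clarification of what ``coincides'' requires.
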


\begin{proof}
When $n=2$ in \eqref{eq14}, we have $E/E_2 \in \mathcal T_3 \cap \mathcal F_2 = \BB_2^\circ=\EE_2$ by Lemma \ref{lemma15}.  Also, $E_2/E_1 \in \mathcal T_2 \cap \mathcal F_1 = \BB_2 \cap [\XX_0]^\circ = \BB_2 \cap \EE_0^\circ = \XX_1$ by Lemmas \ref{lemma18} and  \ref{lemma19}.  Finally,  $E_1 \in \mathcal T_1 = [\XX_0]=\EE_0$.  Hence the filtration \eqref{eq14}  indeed reduces to \eqref{eq3} under our hypotheses.
\end{proof}

\end{document}